\newtheorem{thm}{Theorem}[section]
\newtheorem{pro}[thm]{Proposition}
\newtheorem{cor}[thm]{Corollary}
\theoremstyle{definition}
\newtheorem{rem}[thm]{Remark}
\newtheorem{exam}[thm]{Example}
\newcommand{\R}{\mathbb R}
\def\ce{Ces\`{a}ro }
\def\ces{\mathcal{C}}
\def\linf{L^\infty}
\def\cx{[\mathcal{C},X]}
\def\lmx{L^1(m_X)}
\def\lmxv{L^1(|m_X|)}
\def\wlmx{L_w^1(m_X)}
\def\ele{L^1}
\def\elp{L^p}
\def\elexp{L_{\textrm{exp}}}
\def\vfi{\varphi}
\def\fix{\varphi_{X}}
\def\imx{I_{m_X}}
\def\mx{m_X}
\def\mxv{|m_X|}
\def\marf{{M(\varphi)}}
\def\laf{{\Lambda(\varphi)}}
\def\lnu{{L^1(m)}}
\def\lnuv{{L^1(|m|)}}
\begin{document}

\title[Abstract  Ces\`{a}ro spaces]
{Abstract  Ces\`{a}ro spaces: Integral representations}

\author{Guillermo P. Curbera}
\address{Facultad de Matem\'aticas \& IMUS,  Universidad de Sevilla,
Aptdo.  1160,  Sevilla 41080, Spain}
\email{curbera@us.es}

\author{Werner J. Ricker}
\address{Math.--Geogr. Fakult\"at, Katholische Universit\"at
Eichst\"att--Ingolstadt, D--85072 Eichst\"att, Germany}
\email{werner.ricker@ku.de}

\thanks{The first author acknowledges the support of the
\lq\lq International Visiting Professor Program 2015\rq\rq, via the
Ministry of Education, Science and Art, Bavaria (Germany).}

\date{\today}

\subjclass[2010]{Primary 46E30, 46G10; Secondary 46B34, 47B10}

\keywords{Ces\`{a}ro operator, rearrangement invariant spaces,
kernel operators, vector measures}

\begin{abstract}
The \ce function spaces $Ces_p=[\ces,L^p]$, $1\le p\le\infty$, have received renewed attention in recent years.
Many properties of $[\ces,L^p]$ are known. Less is known about $\cx$ when the \ce operator
takes its values in a rearrangement invariant (r.i.) space $X$ other than $L^p$. In this paper we
study the spaces $\cx$ via the methods of vector measures and vector integration. These techniques allow us to
identify the absolutely continuous part of $\cx$ and the Fatou completion of $\cx$;
to show that $\cx$ is never reflexive and never r.i.; to identify when $\cx$ is  weakly sequentially complete,
when it is isomorphic to an AL-space, and when it has
the Dunford-Pettis property. The same techniques are used to analyze the operator $\ces\colon\cx\to X$;
it is never compact but,  it can be completely continuous.
\end{abstract}

\maketitle


\section*{Introduction}


\ce function spaces  have attracted  much attention in recent times;
see for example the papers  \cite{astashkin-maligranda0}, \cite{astashkin-maligranda1},
\cite{astashkin-maligranda2} by Astashkin and Maligranda and \cite{lesnik-maligranda-1},
\cite{lesnik-maligranda-2} by L\'{e}snik and Maligranda
and the references therein. These spaces arise when studying the behavior,
in certain function spaces, of the \ce operator
\begin{equation*}
\ces:f\mapsto \ces(f)(x):=\frac{1}{x} \int_0^x f(t)\,dt.
\end{equation*}
A classical result of Hardy motivated the study of the
operator $\ces$ in the $\elp$ spaces,
thereby leading to the spaces  $Ces_p:=\{f: \ces(|f|)\in \elp\}$. It was then natural
to extend the investigation to the so called
abstract \ce spaces $\cx$, where the role of  $\elp$
is replaced by a more general function space $X$, namely, the Banach
function space (B.f.s.)
\begin{equation*}
\cx:=\big\{f: \ces(|f|)\in X\big\},
\end{equation*}
equipped with the norm
\begin{equation*}
\|f\|_{\cx}:=\|\ces(|f|)\|_X,\quad f\in\cx.
\end{equation*}
We will focus our attention on those spaces $X$ which
are rearrangement invariant (r.i.)  on $[0,1]$.

It is known that $[\ces,L^p]=Ces_p$ is not reflexive, \cite[Theorem 1, Remark 1]{astashkin-maligranda0}.
In  Theorem \ref{reflexive} it is shown that $\cx$ is never reflexive.
This  result is established via techniques from a different area. It turns out, for
every r.i.\ space $X\not=\linf$, that the $X$-valued set function
\begin{equation*}
\mx:A\mapsto \mx(A):=\mathcal{C}(\chi_A),
\quad A\subseteq [0,1] \text{ measurable},
\end{equation*}
is $\sigma$-additive, i.e., it is a \textit{vector measure}. This fact can be
successfully used for studying the function space $\cx$. Indeed,
the norm of  $\cx$ is not necessarily
absolutely continuous (a.c.). Actually, the a.c.\ part
$\cx_a$ of $\cx$ is precisely the well understood space $\lmx$ consisting of all the $\mx$-integrable functions
(in the sense of Bartle, Dunford and Schwartz, \cite{bartle-dunford-schwartz}). Moreover,
$\cx$ need not  have the Fatou property. It turns
out that the Fatou completion $\cx''$ of $\cx$
is precisely the space $\wlmx$ consisting of all the weakly $\mx$-integrable functions.

A further relevant point is that the integration operator $\imx\colon\lmx\to X$ given by
$f\mapsto\int f\,d\mx$ is precisely the restriction to $\cx_a$ of the \ce operator
$\ces\colon\cx\to X$.
Moreover, $\lmx$ is the \textit{largest} B.f.s.\ over $[0,1]$ with a.c.\  norm
on which $\ces$ acts with values in $X$.
In addition, the scalar variation measure
$|\mx|$ of the vector measure $\mx$ is always $\sigma$-finite and  possesses a strongly measurable,
Pettis integrable density $F\colon[0,1]\to X$ relative to Lebesgue measure.
A relevant feature for the operator $\ces$ (which a priori is only given by a 
pointwise expression on $\cx$) is that  \textit{integral representations} become
available. First, for  $\ces$ restricted to $\cx_a$, such a representation is given by
\begin{equation}\label{representationBDS}
\ces(f)=\int_{[0,1]} f\,d\mx,\quad f\in\lmx=\cx_a,
\end{equation}
via the Bartle-Dunford-Schwartz integral for vector measures. Actually, 
it turns out specifically for $\mx$ that
\begin{equation}\label{representation}
\ces(f)=\int_{[0,1]} f(y)\,F(y)\,dy,\quad f\in\lmx ,
\end{equation}
which is defined more traditionally   as  a Pettis  integral. Furthermore, for the  class of r.i.\ spaces $X$ where the variation
measure  $\mxv$ is  finite, the representation \eqref{representation} 
when restricted to $\lmxv$ is actually  given via a
\textit{Bochner integrable density} $F$.

The paper is organized as follows.

In Section 1 we present the preliminaries on Banach function spaces, rearrangement invariant spaces and vector integration
that are needed in the sequel.

Section 2 is devoted to establishing the main properties of the vector measure $\mx$.
A large class of r.i.\ spaces $X$ for which $\mxv$ is a finite measure is identified;
see Proposition \ref{variation-L} and Corollary \ref{condition var}.

In Section 3 the study of the space $\cx$ is undertaken with the vector measure $\mx$ and its space of
integrable function $\lmx$ as main tools. As mentioned above, in
Theorem \ref{reflexive} it is proved that $\cx$ is never reflexive. It is also
established as part of that result
that  $\cx$ fails to be r.i.\ (this was proved for $[\ces,L^p]$ in
\cite[Theorem 1]{astashkin-maligranda1} and conjectured in \cite[Remark 3]{lesnik-maligranda-1}).
The problem of when $\cx$ is order isomorphic to an AL-space,
that is, to a Banach lattice where the norm is additive over disjoint functions, is also considered.
It is shown (cf. Theorem \ref{L1}(a)), for a large class of Lorentz spaces $\laf$, that
$[\ces,\laf]$ is order isomorphic to $L^1(|m_{\laf}|)$ with $|m_{\laf}|$ a finite, non-atomic measure.
Crucial for the proof of the existence of this order isomorphism
is an identification, due to L\'esnik and Maligranda, \cite{lesnik-maligranda-1}, of the associate
space $\cx'$ of the B.f.s.\ $\cx$ (under some restrictions on the r.i.\ space $X$).

In Section 4 we analyze the operator $\ces\colon\cx\to X$. The identification of the
restriction of $\ces$, via $\imx$, is used to show that the operator
$\ces\colon\cx\to X$ is never compact; see Proposition \ref{compact}.
For r.i.\ spaces $X$ satisfying $X\subseteq\lmxv$,
which forces both $\mx$ to have finite variation and  $\ces\colon X\to X$ to act boundedly, it follows
(cf.\ Proposition \ref{cc lmxv}) that $\ces\colon X\to X$ is necessarily completely continuous.
This result is quite useful in view of the fact that $\ces\colon X\to X$ is never compact (whenever
it is a bounded operator). The complete continuity of the restricted integration operator
$\imx\colon\lmxv\to X$ can be `lifted' to the complete continuity of  $\ces\colon\cx\to X$,
under some conditions on the r.i.\ space $X$; see Proposition \ref{cc lmxv}. This property
of $\ces\colon\cx\to X$ is related to $\cx$ being order isomorphic to an AL-space;  see Proposition \ref{cc lmx}.
The section ends with another extension of a result valid for $X=L^p$. It was
shown in \cite[\S 6, Corollary 1]{astashkin-maligranda1} that the spaces $Ces_p$, $1<p<\infty$, fail to
have the Dunford-Pettis property. This result is extended to include
all reflexive r.i.\ spaces $X$ having a non-trivial upper Boyd index; see Proposition \ref{4.7}.

In the final section we discuss in fine detail the role of the Fatou property in relation to $\cx$, and
derive some consequences for $\cx$; see Proposition \ref{5.2}.

We only consider r.i.\ spaces $X\not=\linf$ because $Ces_\infty=[\ces,\linf]$, known as the
Korenblyum-Kre\u{\i}n-Levin space, has already been thoroughly investigated; see
\cite{astashkin-maligranda1}, \cite{astashkin-maligranda2} and the references therein.


\section{Preliminaries}


A \textit{Banach function space} (B.f.s.) $X$ on [0,1] is a
Banach space  of classes of measurable functions on [0,1] satisfying
the ideal property, that is, $g\in X$ and $\|g\|_X\le\|f\|_X$
whenever $f\in X$ and $|g|\le|f|$ $\lambda$--a.e.,  where $\lambda$ is
the Lebesgue measure on [0,1].
The \textit{associate space} $X'$  of $X$ consists  of all
functions $g$ satisfying $\int_0^1|f(t)g(t)|\,dt<\infty$, for every
$f\in X$. The space $X'$ is a subspace of the Banach space dual $X^*$ of $X$.
The \textit{absolutely continuous} (a.c.)  part $X_a$ of $X$ is  the space of all functions
$f\in X$ satisfying $\lim_{\lambda(A)\to0}\|f\chi_A\|_X=0$; here $\chi_A$
is the characteristic function of the set $A\in\mathcal{M}$, with $\mathcal{M}$ denoting the $\sigma$-algebra
of all Lebesgue measurable subsets of $[0,1]$.
If $\linf\subseteq X_a$,  then the closure of $\linf$ in $X$ coincides with $X_a$ and
$(X_a)'=X'$. The space $X$ is said to have a.c.\ norm if $X=X_a$. In this case, $X'=X^*$.
The space $X$ satisfies the \textit{Fatou property} if $\{f_n\}\subseteq X$ with
 $0\le f_n\le f_{n+1}\uparrow f$ $\lambda$-a.e.\ and $\sup_n\|f_n\|_X<\infty$
imply that $f\in X$ and $\|f_n\|_X\to\|f\|_X$.  The second  associate space $X''$ of $X$ is defined as $X''=(X')'$.
The space $X$ has the Fatou property if and only if $X''=X$. Unless specifically stated, it is not
assumed that the Fatou property holds in $X$.

A \textit{rearrangement invariant} (r.i.) space $X$ on [0,1] is a
B.f.s.\  on $[0,1]$ such that if $g^*\le f^*$ and $f\in X$,  then $g\in X$ and $\|g\|_X\le\|f\|_X$.
Here $f^*$ is the \textit{decreasing rearrangement} of $f$, that is, the
right continuous inverse of its distribution function:
$\lambda_f(\tau):=\lambda(\{t\in [0,1]:\,|f(t)|>\tau\})$.
The associate space $X'$ of a r.i.\ space $X$ is again a r.i.\ space.
A r.i.\ space $X$ satisfies $\linf\subseteq X\subseteq \ele$.
If $X\not=\linf$,  then $(X_a)'=X'$.
The \textit{fundamental function} $\fix$ of $X$ is defined via
$\varphi_X(t):=\|\chi_{[0,t]}\|_X$. For $X\not=\linf$ we have $\lim_{t\to0}\fix(t)=0$,
\cite[Lemma 3, p.220]{rodin-semenov}.

Important classes of r.i.\ spaces are the Lorentz and Marcinkiewicz
spaces. Let $\varphi\colon[0,1]\to[0,\infty)$  be an increasing, concave
function with $\varphi(0)=0$. The Lorentz space $\Lambda(\varphi)$
consists of all measurable functions $f$ on [0,1] satisfying
\begin{equation*}
\|f\|_{\Lambda(\varphi)}:=\int_0^1f^*(s)\,d\varphi(s) <\,\infty.
\end{equation*}
Let $\varphi\colon[0,1]\to[0,\infty)$  be a
quasi-concave function, that is, $\varphi$ is increasing, the function $t\mapsto\varphi(t)/t$
is decreasing and $\varphi(0)=0$. The Marcinkiewicz space $\marf$  consists
 of all measurable functions  $f$ on [0,1] satisfying
\begin{equation*}
\|f\|_{\marf}:=\sup_{0<t\le 1}\, \frac{\varphi(t)}{t}\,\int_0^tf^*(s)
\, ds<\infty.
\end{equation*}

The Marcinkiewicz space $M(\varphi)$ and the Lorentz space $\Lambda(\varphi)$
are, respectively, the largest and the smallest r.i.\ spaces having the
fundamental function $\varphi$. That is, for any r.i.\ space $X$  we have
$\Lambda(\fix)\subseteq X\subseteq M(\fix)$.
The associate space $\laf'=M(\psi)$ and $\marf'=\Lambda(\psi)$, for
$\psi(t):=t/\varphi(t)$. In the notation of \cite[p.144]{krein-petunin-semenov},
observe that $\marf=M_\psi$.

If $\phi$ is a positive function defined on [0,1], then  its lower and upper
dilation indices are, respectively, defined by
\begin{equation*}
\gamma_\phi := \lim_{t\to 0^+} \frac{\log\big(\sup_{\,0<s\le 1}
\frac{\phi(st)}{\phi(s)}\big)}{\log t}, \qquad
\delta_\phi := \lim_{t\to +\infty} \frac{\log\big(\sup_{\,0<s\le
1/t} \frac{\phi(st)}{\phi(s)}\big)}{\log t}.
\end{equation*}
For a quasi-concave function $\vfi$ it is known that $0\le \gamma_\vfi \le \delta_\vfi\le1$.
Whenever $\delta_\varphi<1$ the following equivalence for the above norm  in
$\marf$ holds (see \cite[Theorem II.5.3]{krein-petunin-semenov}):
\begin{equation}\label{norm-marz}
\|f\|_{\marf}\asymp\sup_{0<t\le 1}\, \varphi(t)f^*(t).
\end{equation}
The notation   $A\asymp B$ means that there exist constants $C>0$ and
$c>0$ such that $c{\cdot}A\le B\le C{\cdot}A$.
For further details concerning
r.i.\ spaces we refer to \cite{bennett-sharpley}, \cite{krein-petunin-semenov},
\cite{lindenstrauss-tzafriri}; care should be taken with  \cite{bennett-sharpley}
as all r.i.\ spaces there are assumed to have the Fatou property.
General references for B.f.s.' include \cite{okada-ricker-sanchez}, \cite[Ch.15]{zaanen1}.

\bigskip

We recall briefly  the theory of integration of real functions with
respect to a vector measure, initially due to  Bartle, Dunford and
Schwartz, \cite{bartle-dunford-schwartz}. Let $(\Omega,\Sigma)$ be a measurable
space, $X$ a Banach space  and $m\colon\Sigma\to X$ a $\sigma$-additive vector measure.
For each $x^*\in X^*$, denote the $\R$--valued  measure $A\mapsto
\langle x^*,m(A)\rangle$ by $x^*m$ and its variation  measure
by $|x^*m|$. A measurable function $f\colon\Omega\to\R$ is said to be
\textit{integrable with respect to} $m$ if $f\in L^1(|x^*m|)$, for
every $x^*\in X^*$, and for each $A\in\Sigma$ there exists a
vector in $X$ (denoted by $\int_Af\,dm$) satisfying
$\langle\int_Af\,dm,x^*\rangle=\int_Af\,d\, x^*m$, for every
$x^*\in X^*$.   The $m$--integrable functions
form a linear space in which
\begin{equation}\label{norm-m}
 \|f\|_{L^1(m)} : =\sup\left\{\int |f|\,
d|x^*m| \colon x^*\in X^*, \|x^*\|\le1\right\}
\end{equation}
is a seminorm. A  set $A\in\Sigma$ is called $m$--\textit{null} if $|x^*m|(A)=0$ for every
$x^*\in X^*$. Identifying functions which differ only in a $m$--null
set, we obtain a Banach space (of classes) of $m$--integrable
functions, denoted by $L^1(m)$. It is a B.f.s.\
for the $m$--a.e.\ order and has a.c.\  norm.
The simple functions are dense in
$L^1(m)$ and the space $L^\infty(m)$ of all $m$--essentially bounded functions is
contained in $L^1(m)$. The \textit{integration operator} $I_{m}$ from
$L^1(m)$ to $X$ is defined by $f\mapsto\int f\,dm:=\int_\Omega f\,dm$. It is
continuous, linear and has operator norm at most one. No assumptions have
been made on the \textit{variation measure} $|m|$ of  $m$ (cf.  \cite[\S3.1]{okada-ricker-sanchez}) in the
definition of $L^1(m)$. In general $\lnuv\subseteq\lnu$.
We will repeatedly use the following property:
let $Y$ be the closed linear subspace of $X$ generated  by the
range $m(\Sigma)$ of the vector measure $m$. Then
$\lmx=L^1(m_Y)$ and $\lmxv=L^1(|m_Y|)$, where $m_Y\colon\Sigma\to Y$
is given by $m_Y(A):=m_X(A)$ for all $A\in\Sigma$.

The B.f.s.'  $\lnu$ can be quite different to the
classical $L^1$--spaces of scalar measures and may be difficult to
identify explicitly.  Indeed, every Banach
lattice with a.c.\  norm and having a weak unit (e.g. $L^2([0,1])$)
is the $L^1$--space of some vector measure, \cite[Theorem
8]{curbera1}.
For further details concerning $L^1(m)$ and $I_m$ see, for example, \cite[Ch.3]{okada-ricker-sanchez}
and the references therein.


\section{The vector measure induced by $\ces$}


The vector measure associated to the \ce operator is defined by
\begin{equation*}\label{measure-m}
m \colon A\longmapsto m(A):=\mathcal{C}(\chi_A),\qquad A \in\mathcal{M}.
\end{equation*}
Since $\mathcal{C}$ maps $\linf$ into itself, we have $m(\mathcal{M})\subseteq \linf$.
So, $m$ is a well defined,  finitely additive vector
measure with values in $\linf$ but, it is not $\sigma$-additive
as an $\linf$-valued measure, \cite{ricker}.
For every r.i.\ space $X$ we have $\linf\subseteq X$. Accordingly, $m$  is also
well defined and finitely additive  with values in $X$. We will denote $m$
by $m_X$ whenever it is necessary to indicate that the values of $m$ are considered to be in $X$.


\begin{thm}\label{measure}
Let $X\not=\linf$ be a r.i.\ space.
\begin{itemize}
\item[(a)] The measure $m_X$ is $\sigma$-additive.
\item[(b)] The measure $m_X$ has  a strongly measurable,
$X$-valued, Pettis $\lambda$-integrable density
$F$ given by
\begin{equation}\label{density}
F: y\in[0,1]\mapsto F_y\in X \textrm{ with }  F_y(x)=:\frac1x \chi_{[y,1]}(x),\quad 0<x\le1.
\end{equation}
\item[(c)] The measure $m_X$ has $\sigma$-finite variation given by
\begin{equation}\label{var}
\mxv(A)=\int_A\|F_y\|_X\,dy,\quad A\in\mathcal{M}.
\end{equation}

In the event that $\mx$ has finite variation, $F$ is actually Bochner $\lambda$-integrable.
\item[(d)] The range $\mx(\mathcal{M})$ of $m_X$ is a relatively compact set in $X$.
\end{itemize}
\end{thm}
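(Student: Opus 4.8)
The plan is to prove the four assertions in the order (b) $\Rightarrow$ (a), then (c), then (d), since the explicit density $F$ is the key object that makes everything transparent. First I would verify that the stated map $F\colon y\mapsto F_y$, with $F_y(x)=\tfrac1x\chi_{[y,1]}(x)$, really takes values in $X$: indeed $F_y = \tfrac1x\chi_{[y,1]}\le \tfrac1x\chi_{[y,1]}$ is dominated pointwise by a fixed element, and more usefully $F_y^*$ is bounded (it equals $\tfrac1x$ on $[0,1-y]$ roughly, hence $F_y \in L^\infty \subseteq X$ for each fixed $y>0$, with $F_0$ needing a separate check — but $y=0$ is a single point of Lebesgue measure zero so it is irrelevant). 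Next, for each $x^*\in X^*$ I would compute $\langle x^*, m_X(A)\rangle = \langle x^*, \mathcal C(\chi_A)\rangle$ and check, using the definition $\mathcal C(\chi_A)(x) = \tfrac1x\int_0^x \chi_A(t)\,dt = \int_0^1 \chi_A(y) \tfrac1x\chi_{[y,1]}(x)\,dy = \int_0^1 \chi_A(y) F_y(x)\,dy$, that Fubini (applied to the scalar kernel, against the measure $x^*$, which is of the form $\int (\cdot) g\,dx$ when $x^*\in X'$, and in general handled by the fact that $x^*$ acts as a continuous functional) yields $\langle x^*, m_X(A)\rangle = \int_A \langle x^*, F_y\rangle\,dy$. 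This simultaneously shows that $y\mapsto \langle x^*,F_y\rangle$ is in $L^1[0,1]$ for every $x^*$, that $F$ is weakly (hence, being strongly measurable, Pettis) $\lambda$-integrable, and that $m_X(A) = \text{Pettis-}\!\int_A F\,d\lambda$. Strong measurability of $F$ follows since $y\mapsto F_y$ is continuous from $(0,1]$ into $X$ (estimate $\|F_{y}-F_{y'}\|_X = \|\tfrac1x\chi_{[y\wedge y', y\vee y']}\|_X \to 0$ as $y'\to y$, using $X\ne L^\infty$ so that the fundamental function tends to $0$, together with boundedness of $\tfrac1x$ away from $0$). Once $m_X$ is represented by a Pettis integrable density, $\sigma$-additivity (part (a)) is immediate from the Pettis version of dominated/Vitali convergence, or simply from the Orlicz–Pettis theorem applied to $A\mapsto \int_A \langle x^*,F_y\rangle\,dy$, which is a genuine $\sigma$-additive scalar measure for each $x^*$.

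For part (c), the variation formula $|m_X|(A) = \int_A \|F_y\|_X\,dy$ is exactly the standard formula for the variation of a vector measure given by a Bochner-locally-integrable (here Pettis, strongly measurable) density: the inequality $|m_X|(A)\le \int_A\|F_y\|_X\,dy$ comes from $\|m_X(A_i)\| = \|\int_{A_i}F\,d\lambda\|\le \int_{A_i}\|F_y\|_X\,dy$ summed over a partition, and the reverse inequality from approximating $\|F_y\|_X$ by simple functions and choosing suitable Hahn–Banach functionals $x^*_i$ on each piece. The $\sigma$-finiteness of $|m_X|$ then follows because $\|F_y\|_X$ is finite for each $y\in(0,1]$ and in fact $y\mapsto\|F_y\|_X$ is finite-valued and (being continuous on $(0,1]$) locally bounded there, so $[0,1] = \{0\}\cup\bigcup_n (1/n,1]$ exhibits $|m_X|$ as a countable union of finite-measure pieces. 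The final clause of (c) is then a general fact: a strongly measurable Pettis-integrable density whose variation measure is finite, i.e. with $\int_0^1\|F_y\|_X\,dy<\infty$, is automatically Bochner integrable (Bochner integrability $\iff$ strong measurability plus $\int\|F_y\|_X\,dy<\infty$), and in that case the Pettis and Bochner integrals agree.

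For part (d), relative compactness of the range $m_X(\mathcal M)$, the plan is to use the fact that $m_X$ has a Bochner integrable density when the variation is finite — but in general the variation is only $\sigma$-finite, so I would instead argue directly: the range of any $\sigma$-additive vector measure with a strongly measurable Pettis integrable density, or more simply the range of any vector measure $m_X$ for which there is a control measure with respect to which $m_X$ has relatively compact ``average range'', is relatively compact. Concretely, I would write $m_X(A) = \int_A F\,d\lambda$ and approximate: fix $\varepsilon>0$, pick $\delta>0$ with $\int_E\|F_y\|_X\,dy<\varepsilon$ whenever $\lambda(E)<\delta$ — possible on $[\delta',1]$ after first discarding the small set $[0,\delta')$ on which $\int\|F_y\|_X$ is small (using $\sigma$-finiteness and absolute continuity of $y\mapsto\int_0^y\|F_t\|_X\,dt$ where finite). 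On the remaining set partition $[\delta',1]$ into finitely many pieces $I_1,\dots,I_N$ of small Lebesgue measure, each mapped by $F$ into a set of small $X$-diameter (continuity of $y\mapsto F_y$); then $\{m_X(A):A\in\mathcal M\}$ is within $\varepsilon$ (in the norm of $X$) of the bounded finite-dimensional set $\{\sum_j t_j\, m_X(I_j): t_j\in[0,1]\}$ up to the discarded $\varepsilon$-error, hence totally bounded. The main obstacle I anticipate is the bookkeeping in part (d) when $|m_X|$ is merely $\sigma$-finite rather than finite: one must handle the ``tail'' near $y=0$ carefully, and the clean way is to observe that for each $n$ the restriction of $m_X$ to subsets of $[1/n,1]$ has finite variation, hence a Bochner density, hence (by the classical Bartle–Dunford–Schwartz / Rybakov circle of results, or directly) relatively compact range, and then show the full range is the closure of the union, using that $\|m_X(A)-m_X(A\cap[1/n,1])\|\le |m_X|([0,1/n]\setminus\{0\})\to 0$; this last convergence is exactly where $\sigma$-finiteness together with $\int_0^{1/n}\|F_y\|_X\,dy\to 0$ (which must itself be justified — it holds precisely because the variation being $\sigma$-finite and the total mass $\|m_X([0,1])\|$ being finite forces integrability of $\|F_y\|_X$ on $[0,1]$, OR, if $|m_X|([0,1])=\infty$, one argues instead via Pettis integrability that $m_X(A\cap[0,1/n])\to 0$ in $X$) is the delicate point.
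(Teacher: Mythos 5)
Your architecture is essentially the paper's: put the explicit density $F$ at the centre, get strong measurability from the continuity estimate $\|F_t-F_s\|_X=\|\tfrac1x\chi_{[t,s)}\|_X\le\tfrac1t\varphi_X(s-t)$, identify $m_X(A)$ with $\int_A F_y\,dy$ by Fubini, and localize to $[1/n,1]$ for the variation formula, $\sigma$-finiteness and Bochner integrability. Deriving (a) from (b) via Orlicz--Pettis is a legitimate alternative to the paper's direct argument (the paper proves (a) first: $\chi_{A_n}\downarrow0$ gives $\mathcal{C}(\chi_{A_n})\downarrow0$ a.e., and since $m_X(\mathcal{M})\subseteq L^\infty\subseteq X_a$ and $X_a$ has a.c.\ norm, $\|m_X(A_n)\|_X\to0$). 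The genuine gap is in your Fubini/duality step. For $x^*\in X'$ the computation is fine, but your parenthetical claim that general $x^*\in X^*$ are ``handled by the fact that $x^*$ acts as a continuous functional'' is not an argument: if $X$ lacks a.c.\ norm (Marcinkiewicz spaces, for instance) then $X^*$ contains singular functionals not given by integration against any function, so Fubini does not apply to them; and you cannot pull such an $x^*$ through the pointwise-defined integral $\int_A F_y\,dy$ by continuity, because near $y=0$ that integral need not be a norm limit of the truncated Bochner integrals unless you already know $m_X(A\cap[0,1/n])\to0$ in norm --- essentially part (a), which in your scheme comes \emph{after} (b). Since Pettis integrability demands the scalar identity for all of $X^*$, and your proof of (a) rests on it, this step is load-bearing. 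The paper's fix is short and you should adopt it: every vector in sight ($F_y$ for $y>0$, and $m_X(A)=\mathcal{C}(\chi_A)\in L^\infty$) lies in $X_a$, and $(X_a)^*=(X_a)'=X'$ because $X\ne L^\infty$; hence the restriction of any $x^*\in X^*$ to $X_a$ is integration against some $g\in X'\subseteq L^1$, and your Fubini computation for $0\le g\in X'$ settles the general case.

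Two smaller points. In (c), the lower bound for the variation as you sketch it (``approximate $\|F_y\|_X$ by simple functions and choose Hahn--Banach functionals on each piece'') does not work as stated: near-constancy of the scalar $\|F_y\|_X$ on a piece does not yield a single functional nearly norming $F_y$ for all $y$ in that piece; the clean route, which you already have the ingredients for, is the paper's: on $[1/n,1]$ the density is continuous, hence Bochner integrable, so the classical variation formula holds there, and then let $n\to\infty$ using monotone convergence and $\sigma$-additivity of $|m_X|$. In (d), your tail estimate $\|m_X(A)-m_X(A\cap[1/n,1])\|_X\le|m_X|([0,1/n])$ is vacuous precisely in the interesting cases where the variation is infinite (e.g.\ $X=L^p_{\mathrm{exp}}$, $p\ge1$), and the fallback ``$m_X(A\cap[0,1/n])\to0$ by Pettis integrability'' is only pointwise in $A$, whereas total boundedness of the range requires $\sup_{A\in\mathcal{M}}\|m_X(A\cap[0,1/n])\|_X\to0$. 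That uniform smallness is the continuity from above at $\emptyset$ of the semivariation of the (by then established) $\sigma$-additive measure $m_X$, a standard consequence of the existence of a control measure (Bartle--Dunford--Schwartz/Rybakov), and should be invoked explicitly; with it, your ``compact on each piece plus uniformly small tail'' argument goes through and is in substance what the paper's citations to Okada--Ricker--S\'anchez P\'erez carry out.
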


\begin{proof}
(a) Let $(A_n)$ be a sequence of sets with $A_n\downarrow\emptyset$.
Then the functions $(\chi_{A_n})$ decrease pointwise to zero.
Since $\mathcal{C}$ is a positive operator, the sequence
$(\mathcal{C}(\chi_{A_n}))$ is also decreasing; by the
Dominated Convergence Theorem applied to $\chi_{A_n}\downarrow0$
it follows that $(\mathcal{C}(\chi_{A_n}))$
actually decreases to zero a.e.
Recall that $\mx(\mathcal{M})\subseteq \linf\subseteq X_a$.
But, $X_a$ has a.c.\  norm and so $\|\mathcal{C}(\chi_{A_n})\|_{X_a}\to0$.
Since the norms of $X_a$ and  $X$ coincide, we have
$\|\mathcal{C}(\chi_{A_n})\|_{X}\to0$, i.e.,  $m_X(A_n)\to0$ in $X$.

(b) Consider the $X$-valued vector function $F$ given by \eqref{density}.
It is a.e.\ well defined  since, for each $0<y\le 1$, we have
$F_y\in\linf\subset X$. To prove that it is  strongly measurable
it suffices to verify that $y\in(0,1]\mapsto F_y\in X$ is continuous. Fix $0<t<s\le 1$, in which case
\begin{equation*}
\|F_t-F_s\|_X= \left\|\frac1x\chi_{[t,s)}\right\|_X \le \frac1t\varphi_X(s-t).
\end{equation*}
Since $X\not=\linf$, it follows that $\varphi_X(s-t)\to0$ as $(s-t)\to0$.

Next we check the Pettis $\lambda$-integrability of $F$. Note that
 $F_y\in\  X_a$ for $y\in(0,1]$.
For any $0\le g\in (X_a)'$ we have, via Fubini's theorem, that
\begin{equation*}
\int_0^1 \big\langle F_y,g\big\rangle\,dy =
\int_0^1 \int_0^1 \frac1x\chi_{[y,1]}(x)g(x)\,dx\,dy
=
\int_0^1 g(x)\,dx,
\end{equation*}
which is surely finite as $(X_a)'\subseteq L^1$. Since  elements of $X^*$ restricted to $X_a$ belong to
$(X_a)^*$ and $(X_a)^*=(X_a)'$, it follows that
 $y\mapsto \langle F_y,x^*\rangle\in\ele$ for every $x^*\in X^*$.

It remains to check that $F$ is the Pettis $\lambda$-integrable density for $\mx$.
Fix $A\in\mathcal{M}$ and recall that $m_X(A)=\mathcal{C}(\chi_A)\in X_a$.
For  $0\le g\in (X_a)'$ an application of Fubini's theorem yields
\begin{eqnarray*}
\big\langle m_X(A),g\big\rangle&=&\int_0^1g(x)
\left(\int_0^1\frac1x\chi_{[0,x]}(y)\chi_A(y)\,dy\right)\,dx
\\
&=&
\int_A \int_0^1g(x)\frac1x\chi_{[y,1]}(x)\,dx\,dy
\\
&=&\int_A \Big\langle \frac1x\chi_{[y,1]}(x), g(x)\Big\rangle\,dy
\\
&=&
 \int_A \Big\langle F_y, g\Big\rangle\,dy .
\end{eqnarray*}
Since this is valid for every $0\le g\in(X_a)'$ and $(X_a)^*=(X_a)'$, it follows
that $F\colon y\mapsto F_y$ is Pettis $\lambda$-integrable with
\begin{equation}\label{aaa}
\int_A F_y\,dy:=m_X(A)\in X_a\subset X,\quad A\in\mathcal{M} .
\end{equation}

(c) Fix $0<a<1$. Consider the  measure $m_X$ restricted to  $[a,1]$.   Since $F$
is continuous on the compact set $[a,1]$, we have
$\int_{[a,1]} \|F_y\|_X\,dy<\infty$. According to \eqref{aaa}, $y\mapsto F_y$ is
then a Bochner $\lambda$-integrable density
for $\mx$ on $[a,1]$. Accordingly,
\begin{equation}\label{equality}
\mxv(A)
=\int_A \|F_y\|_X\,dy,\quad A\in\mathcal{M},\quad A\subseteq[a,1] .
\end{equation}

Let now $A\in\mathcal{M}$. Set $A_n:=[1/n,1]\cap A$, for $n\ge2$,
in which case $\mxv(A_n)<\infty$.
Observing that $\chi_{A_n}(y)\|F_y\|_X\uparrow\chi_A(y)\|F_y\|_X$
$\lambda$-a.e.\ it follows from \eqref{equality}
and the $\sigma$-additivity of $\mxv$ that
$$
\mx(A)=\lim_n\mx(A_n)=\lim_n\int_{A_n}\|F_y\|_X\,dy = \int_{A}\|F_y\|_X\,dy .
$$
This establishes \eqref{var} and the $\sigma$-finiteness of the variation.

In the event that $\mx$ has finite variation,  \eqref{var} implies that $y\mapsto\|F_y\|_X$ belongs
to $L^1$ and hence $F$, being strongly measurable, is Bochner $\lambda$-integrable.

(d) Set $D_n=(1/2^n,1/2^{n-1}]$, for $n\ge1$. Then for each $n\ge1$
we have $|m_X|(D_n)<\infty$. Moreover,  the
density $F$ is Bochner $\lambda$-integrable over each $D_n$. Hence, the
range $m_X(\mathcal{M}_{D_n})$ is relatively compact in $X$, \cite[p.148]{okada-ricker-sanchez}, where
$\mathcal{M}_{D_n}:=\{A\in\mathcal{M}:A\subseteq D_n\}$. Thus,
$$
m_X(\mathcal{M})=\sum_{n=1}^\infty m_X(\mathcal{M}_{D_n})
:=\Big\{\sum_{n=1}^\infty f_n:f_n\in\mx(\mathcal{M}_{D_n}) \textrm{ for } n\in\mathbb{N}\Big\}.
$$
Arguing as in the proof of Corollary 2.43 (see also part II of Proposition 3.56) in \cite{okada-ricker-sanchez} we deduce
that $m_X(\mathcal{M})$ is relatively compact in $X$.
\end{proof}

\begin{rem}\label{2.2}
(a) The $\sigma$-finiteness of $\mxv$ also follows from a general result on Pettis integration,
\cite[Proposition 5.6(iv)]{van-dulst}. Since $([0,1],\mathcal{M},\lambda)$ is a perfect measure space,
the relative compactness of $\mx(\mathcal{M})$ in $X$ is also a general result (due
to C. Stegall), \cite[Proposition 5.7]{van-dulst}.

(b) It follows from \eqref{var} that $\lambda$ and $\mx$ have the same null sets.
\end{rem}


For certain r.i.\ spaces $X$ it is possible to compute $\mxv$ precisely.

\begin{pro}\label{variation-L}
For the Lorentz space $\Lambda(\varphi)$ we have
\begin{equation}\label{normL}
\|F_y\|_{\Lambda(\varphi)}= \int_0^{1-y} \frac{\varphi'(t)}{t+y}\,dt,\quad y\in(0,1],
\end{equation}
and
\begin{equation*}
|m_{\Lambda(\varphi)}|([0,1]) =\int_0^1\log(1/t)\,\varphi'(t)\,dt .
\end{equation*}

Consequently, $m_{\Lambda(\varphi)}$ has finite variation precisely
when $\log(1/t)\in\Lambda(\varphi)$ and, in that case,
$|m_{\Lambda(\varphi)}|([0,1])=\big\|\log(1/t)\big\|_{\Lambda(\varphi)}$.
\end{pro}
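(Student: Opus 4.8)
The plan is to compute $\|F_y\|_{\Lambda(\varphi)}$ directly from the definition of the Lorentz norm, then integrate over $y$ using Theorem~\ref{measure}(c), and finally simplify the resulting double integral by Fubini's theorem. The first step is to determine the decreasing rearrangement of the function $F_y(x)=\tfrac1x\chi_{[y,1]}(x)$ for a fixed $y\in(0,1]$. On $[y,1]$ the function $x\mapsto 1/x$ is already decreasing, with values running from $1$ (at $x=1$) up to $1/y$ (at $x=y$); outside $[y,1]$ it is zero. Hence $F_y$ is supported on a set of measure $1-y$, and its decreasing rearrangement is $F_y^*(s)=\tfrac1{s+y}$ for $s\in[0,1-y)$ and $F_y^*(s)=0$ for $s\in[1-y,1]$; this is just the observation that the level set $\{F_y>\tau\}=[y,\min(1/\tau,1)]$ has measure $\min(1/\tau,1)-y$, whose right-continuous inverse is $s\mapsto 1/(s+y)$. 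Plugging this into $\|f\|_{\Lambda(\varphi)}=\int_0^1 f^*(s)\,d\varphi(s)$ and writing $d\varphi(s)=\varphi'(s)\,ds$ (legitimate since $\varphi$ is concave, hence absolutely continuous on $[0,1]$ away from $0$, and the point mass of $\varphi$ at $0$ is zero as $\varphi(0)=0$) gives exactly \eqref{normL}, after the harmless relabelling of the integration variable as $t$.

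Next I would substitute \eqref{normL} into formula \eqref{var} of Theorem~\ref{measure}(c) with $A=[0,1]$, obtaining
\begin{equation*}
|m_{\Lambda(\varphi)}|([0,1])=\int_0^1\|F_y\|_{\Lambda(\varphi)}\,dy=\int_0^1\int_0^{1-y}\frac{\varphi'(t)}{t+y}\,dt\,dy.
\end{equation*}
The region of integration in the $(t,y)$-plane is $\{(t,y):0<y\le 1,\ 0<t<1-y\}=\{(t,y):0<t<1,\ 0<y<1-t\}$, a triangle. Applying Fubini's theorem (the integrand is nonnegative, so Tonelli applies without any a priori finiteness) and integrating first in $y$:
\begin{equation*}
\int_0^1\varphi'(t)\int_0^{1-t}\frac{dy}{t+y}\,dt=\int_0^1\varphi'(t)\,\big[\log(t+y)\big]_{y=0}^{y=1-t}\,dt=\int_0^1\varphi'(t)\,\big(\log 1-\log t\big)\,dt=\int_0^1\log(1/t)\,\varphi'(t)\,dt,
\end{equation*}
which is the claimed value of the total variation.

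Finally, the ``consequently'' part is immediate: $m_{\Lambda(\varphi)}$ has finite variation if and only if the integral $\int_0^1\log(1/t)\,\varphi'(t)\,dt$ is finite. Since $t\mapsto\log(1/t)$ is nonnegative and already decreasing on $(0,1]$, it is its own decreasing rearrangement, so $\|\log(1/t)\|_{\Lambda(\varphi)}=\int_0^1\log(1/t)\,d\varphi(t)=\int_0^1\log(1/t)\,\varphi'(t)\,dt$; thus finiteness of the variation is precisely the condition $\log(1/t)\in\Lambda(\varphi)$, and in that case the total variation equals $\|\log(1/t)\|_{\Lambda(\varphi)}$. The only point requiring a little care — and the main (mild) obstacle — is the justification that $d\varphi(s)=\varphi'(s)\,ds$ as measures on $(0,1]$, i.e.\ that the concave function $\varphi$ carries no singular part; this follows from absolute continuity of concave functions on compact subintervals of $(0,1]$ together with $\varphi(0^+)=0$, so no Lebesgue–Stieltjes subtleties survive. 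Everything else is a rearrangement computation and an application of Tonelli's theorem.
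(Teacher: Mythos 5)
Your proposal is correct and follows essentially the same route as the paper: compute $(F_y)^*(t)=\frac{1}{t+y}\chi_{[0,1-y]}(t)$, plug into the Lorentz norm to get \eqref{normL}, integrate via \eqref{var} and Fubini (Tonelli) over the triangle to obtain $\int_0^1\log(1/t)\,\varphi'(t)\,dt$, and conclude using that $\log(1/t)$ is decreasing. The only difference is that you spell out the Fubini computation and the identification $d\varphi(s)=\varphi'(s)\,ds$ explicitly, which the paper leaves implicit.
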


\begin{proof}
For $y\in(0,1]$ the   decreasing rearrangement of $F_y(\cdot)$ is given by
\begin{equation}\label{density*}
(F_y)^*(t)=F_y(t+y)=\frac{1}{t+y}\chi_{[0,1-y]}(t),\quad 0\le t\le1.
\end{equation}
It follows  that
\begin{equation*}
\|F_y\|_{\Lambda(\varphi)}= \int_0^{1-y} \frac{\varphi'(t)}{t+y}\,dt,\quad y\in(0,1].
\end{equation*}
Then, from \eqref{var} we can conclude that
\begin{equation*}
|m_{\Lambda(\varphi)}|(A)=
\int_A\|F_y\|_{\Lambda(\varphi)}\,dy =
\int_A \left(\int_0^{1-y} \frac{\varphi'(t)}{t+y}\,dt\right)\,dy .
\end{equation*}
For $A=[0,1]$ an application of  Fubini's theorem yields
\begin{equation*}
|m_{\Lambda(\varphi)}|([0,1])
=
\int_0^1 \left(\int_0^{1-y} \frac{\varphi'(t)}{t+y}\,dt\right)\,dy
=
\int_0^1\log(1/t)\,\varphi'(t)\,dt .
\end{equation*}
Since $t\mapsto\log(1/t)$ is decreasing, it is clear  that $m_{\laf}$
has finite variation precisely when $\log(1/t)\in\laf$ in which case
$|m_{\laf}|([0,1])=\|\log(1/t)\|_\laf$.
\end{proof}

\begin{exam}\label{variation-L-rem}
The Zygmund spaces of exponential integrability $L^p_{\textrm{exp}}$,
for $p>0$,  are \lq\lq close" to $\linf$; see \cite[Definition IV.6.11]{bennett-sharpley}.
The classical space  $\elexp$ (i.e. $p=1$) is a particular case. The space
$L^p_{\textrm{exp}}$ coincides with  the Marcinkiewicz space $M(\varphi_p)$ for $\varphi_p(t):=\log^{-1/p}(e/t)$.
For $X=\Lambda(\varphi_p)$ we have that $\log(1/t)\in\Lambda(\varphi_p)$ if and only if
$0<p<1$. Hence, in view of Proposition \ref{variation-L}, $m_{\Lambda(\varphi_p)}$ has finite variation if and only if
$0<p<1$.
\end{exam}

Let $X, Y$ be r.i.\ spaces with $X\subseteq Y$, in which case there exists
$K>0$ such that $\|f\|_Y\le K \|f\|_X$ for $f\in X$. In particular,
$\|m_Y(A)\|_Y\le K\|m_X(A)\|_X$ for $A\in\mathcal{M}$. Hence, $m_Y$ has finite variation whenever $m_X$ does.
This observation, together with Proposition \ref{variation-L} and Example \ref{variation-L-rem}
establishes the following result.

\begin{cor}\label{condition var}
Let $X\not=\linf$ be a r.i.\ space.
Suppose that $\laf\subseteq X$ for some increasing, concave function $\vfi$ satisfying
$\vfi(0)=0$ and
$$
\int_0^1\log(1/t)\,\varphi'(t)\,dt<\infty,
$$
that is, $\log(1/t)\in\laf$. Then $\mx$ has finite variation.

In particular, since $\Lambda(\varphi_p)\subseteq M(\varphi_p)=L^p_{\textrm{exp}}$,
this is the case if $L^p_{\textrm{exp}}\subseteq X$ for some $0<p<1$.
\end{cor}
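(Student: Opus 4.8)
The plan is to reduce everything to Proposition~\ref{variation-L} together with the monotonicity of variation under inclusions of r.i.\ spaces. The statement to be proved is Corollary~\ref{condition var}: if $\laf\subseteq X$ for an increasing concave $\vfi$ with $\vfi(0)=0$ and $\log(1/t)\in\laf$, then $\mx$ has finite variation; and in particular this holds when $L^p_{\textrm{exp}}\subseteq X$ for some $0<p<1$.

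First I would invoke Proposition~\ref{variation-L}: the hypothesis $\int_0^1\log(1/t)\,\vfi'(t)\,dt<\infty$ is exactly the condition $\log(1/t)\in\laf$, and the proposition then asserts that $m_{\laf}$ has finite variation, with $|m_{\laf}|([0,1])=\|\log(1/t)\|_{\laf}$. Next I would apply the inclusion observation stated just before the corollary: since $\laf\subseteq X$ as a continuous inclusion of r.i.\ spaces, there is $K>0$ with $\|f\|_X\le K\|f\|_{\laf}$ for all $f\in\laf$; taking $f=m_{\laf}(A)=\ces(\chi_A)=m_X(A)$ (note the underlying set function is the same, only the ambient space changes) gives $\|m_X(A)\|_X\le K\|m_{\laf}(A)\|_{\laf}$ for every $A\in\mathcal M$, hence $|m_X|(A)\le K\,|m_{\laf}|(A)$ by the definition of variation as a supremum over partitions. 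In particular $|m_X|([0,1])\le K\,\|\log(1/t)\|_{\laf}<\infty$, so $\mx$ has finite variation. This is really the entire argument; there is no genuine obstacle, since the heavy lifting (the explicit computation of $\|F_y\|_{\laf}$ and the Fubini evaluation of the integral) was already done in Proposition~\ref{variation-L}.

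For the ``in particular'' clause I would use Example~\ref{variation-L-rem}, which records that $L^p_{\textrm{exp}}=M(\vfi_p)$ with $\vfi_p(t)=\log^{-1/p}(e/t)$ and that $\log(1/t)\in\Lambda(\vfi_p)$ precisely when $0<p<1$. Since $\Lambda(\vfi_p)\subseteq M(\vfi_p)=L^p_{\textrm{exp}}\subseteq X$, applying the first part of the corollary with $\vfi=\vfi_p$ yields that $\mx$ has finite variation. One small point worth checking along the way is that $\vfi_p$ is (equivalent to) an increasing concave function with $\vfi_p(0)=0$, so that $\Lambda(\vfi_p)$ is a genuine Lorentz space in the sense defined in Section~1 — this is standard for the Zygmund fundamental functions and can be dispatched in a line, or simply cited from \cite{bennett-sharpley}.

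The only place one must be mildly careful is the passage $\|m_X(A)\|_X\le K\|m_{\laf}(A)\|_{\laf}$ for all $A$ $\Rightarrow$ $|m_X|\le K\,|m_{\laf}|$: this follows immediately because for any finite measurable partition $\{A_j\}$ of $A$ one has $\sum_j\|m_X(A_j)\|_X\le K\sum_j\|m_{\laf}(A_j)\|_{\laf}\le K\,|m_{\laf}|(A)$, and then take the supremum over partitions. Thus the entire corollary is a two-step consequence of Proposition~\ref{variation-L} and the inclusion remark, with Example~\ref{variation-L-rem} supplying the concrete instance; I expect no step to present a real difficulty.
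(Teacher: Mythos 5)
Your proposal is correct and follows exactly the paper's own route: Proposition \ref{variation-L} gives the finite variation of $m_{\laf}$, the observation preceding the corollary (the inclusion $\laf\subseteq X$ forcing $\|m_X(A)\|_X\le K\|m_{\laf}(A)\|_{\laf}$ and hence the comparison of variations via partitions) transfers it to $m_X$, and Example \ref{variation-L-rem} supplies the $L^p_{\textrm{exp}}$ case for $0<p<1$. Nothing further is needed.
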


\begin{exam}\label{variation-L-rem2}
According to Corollary \ref{condition var},  $\mx$ has finite variation whenever
 $X$ is a Lorentz space $L^{p,q}$ on $[0,1]$ for $(p,q)\in(1,\infty)\times[1,\infty]$ or for $p=q=1$
(see \cite[Definition IV.4.1]{bennett-sharpley}), and whenever $X$ is an Orlicz space $L^\Phi$
satisfying $\Phi(t)\le e^{t{p}}$, $t\ge t_0$,  for some $p\in(0,1)$.
\end{exam}


\section{The Ces\`{a}ro space $\cx$}\label{3}


In \cite{curbera-ricker3} a study of optimal domains for  kernel
operators $Tf(x)=\int_0^1 f(y)K(x,y)\,dy$ was undertaken.
Although the conditions imposed  on the kernel $K(x,y)$ in \cite[\S3]{curbera-ricker3}
do not apply to the kernel $(x,y)\mapsto(1/x)\chi_{[0,x]}(y)$ generating the \ce operator,
a detailed analysis of the arguments given there shows that the only condition needed for
the results to remain valid for r.i.\
spaces $X\not=\linf$ is that the partial function $K_x\colon y\mapsto K(x,y)$ belongs to
$L^1$ for a.e.\ $x\in[0,1]$. The remaining  conditions were
aimed purely at guaranteeing that the vector measure associated with the kernel was $\sigma$-additive as
an $\linf$-valued measure which, in turn, was the way of ensuring  the $\sigma$-additivity
of the measure when interpreted as an $X$-valued measure (for every r.i.\ space $X\not=\linf$).
This last condition of $\sigma$-additivity is obtained, for the case when $T$ is the \ce operator,
by other means; see Theorem \ref{measure}(a). Accordingly,
from the results of \S3 of \cite{curbera-ricker3} we have the following facts.

\begin{pro}\label{3.1}
Let $X\not=\linf$ be a r.i.\ space.
The following assertions hold.
\begin{itemize}
\item[(a)] If $f\in \lmx$, then $f\in\cx$ and $\|f\|_{\lmx}= \|f\|_{\cx}$.
\item[(b)] If $X$ has a.c.\  norm, then $\cx$ has a.c.\  norm and $\cx=\lmx$.
\item[(c)] $[\mathcal{C},X_a]=\cx_a$.
\end{itemize}
Consequently, the following chain of inclusions holds
\begin{equation}\label{inclusions}
\lmxv\subseteq \lmx=L^1(m_{X_a})=[\mathcal{C},X_a]=\cx_a\subseteq\cx.
\end{equation}
\end{pro}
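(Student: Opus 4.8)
The strategy is to transfer the results of \S3 of \cite{curbera-ricker3} to the present setting, the only obstruction being that the kernel $K(x,y)=(1/x)\chi_{[0,x]}(y)$ does not satisfy the hypotheses imposed there to guarantee $\sigma$-additivity of the associated $\linf$-valued measure. Since Theorem~\ref{measure}(a) supplies $\sigma$-additivity of $\mx$ by independent means, and since the partial function $K_x\colon y\mapsto (1/x)\chi_{[0,x]}(y)$ clearly lies in $L^1$ for a.e.\ (indeed every) $x\in(0,1]$, the cited arguments apply verbatim. Thus parts (a), (b), (c) are obtained by invoking the corresponding statements from \cite{curbera-ricker3}. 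Concretely, (a) is the statement that the optimal domain $\lmx$ embeds isometrically into $\cx$; (b) is the fact that when $X$ has a.c.\ norm the kernel-operator optimal domain coincides with $\lmx$ and inherits a.c.\ norm; and (c) is the identity relating the optimal-domain construction applied to $X_a$ with the a.c.\ part of the optimal domain for $X$.

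For part (a) directly: if $f\in\lmx$, then $f$ is $\mx$-integrable, so by the integral representation available for $\mx$ (Theorem~\ref{measure}(b) together with \eqref{aaa}) one has $\ces(|f|)=\imx(|f|)=\int |f(y)|F_y\,dy\in X$, which shows $f\in\cx$. The norm identity $\|f\|_{\lmx}=\|\ces(|f|)\|_X=\|f\|_{\cx}$ then follows from the definition \eqref{norm-m} of the $\lmx$-norm combined with the fact, coming from the Pettis density $F$, that $\|\imx(|f|)\|_X$ and $\sup_{\|x^*\|\le1}\int|f|\,d|x^*\mx|$ agree here; this is exactly the content of the optimal-domain identification in \cite{curbera-ricker3}. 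For (b), when $X=X_a$ the space $\cx$ has a.c.\ norm because $\ces(|f|\chi_A)=\ces(|f|)\chi\dots$ — more precisely, one checks $\lim_{\lambda(A)\to0}\|\ces(|f|\chi_A)\|_X=0$ using absolute continuity of the norm of $X$ applied along the $\mx$-null-set structure (Remark~\ref{2.2}(b)), and then $\cx=\cx_a=\lmx$ by (a) and \cite{curbera-ricker3}. Part (c) follows since $X_a$ is itself an (a.c.-normed) B.f.s.\ to which (b) applies, giving $[\ces,X_a]=L^1(m_{X_a})$, while the general optimal-domain theory gives $\cx_a=[\ces,X_a]$.

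For the displayed chain \eqref{inclusions}: the equality $\lmx=L^1(m_{X_a})$ holds because $\mx$ and $m_{X_a}$ have the same range-generated closed subspace $Y$ of $X$ (since $\mx(\mathcal M)\subseteq\linf\subseteq X_a$), and by the property recalled in \S1 that $L^1(m)$ depends only on $m_Y$; thus $L^1(m_X)=L^1(m_Y)=L^1(m_{X_a})$. The equality $L^1(m_{X_a})=[\ces,X_a]$ is (b) applied to $X_a$, and $[\ces,X_a]=\cx_a$ is (c). The inclusion $\cx_a\subseteq\cx$ is immediate. Finally $\lmxv\subseteq\lmx$ is the general fact $L^1(|m|)\subseteq L^1(m)$ stated in \S1. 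The main obstacle — really the only nontrivial point — is justifying the applicability of the machinery of \cite[\S3]{curbera-ricker3} despite the kernel failing its standing hypotheses; this is precisely resolved by Theorem~\ref{measure}(a), so once that reduction is articulated the rest is bookkeeping with the cited results and with the facts about $L^1(m)$ from Section~1.
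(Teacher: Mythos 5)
Your proposal is correct and follows essentially the same route as the paper: the paper itself proves Proposition~\ref{3.1} by transferring the results of \S3 of \cite{curbera-ricker3}, noting that the only hypothesis genuinely needed there is that the partial functions $y\mapsto K(x,y)$ lie in $L^1$, with the $\sigma$-additivity of $\mx$ supplied instead by Theorem~\ref{measure}(a). Your additional remarks (the positive-vector-measure norm identity for (a), the reduction $L^1(m_X)=L^1(m_Y)=L^1(m_{X_a})$ via the range lying in $X_a$, and $L^1(|m_X|)\subseteq L^1(m_X)$ from Section~1) are consistent with the facts the paper invokes and do not change the argument.
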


In this section we will study various  properties of $\cx$ and examine certain connections
between the spaces appearing in \eqref{inclusions}.

The containment $\lmx\subseteq\cx$ can be strict, as seen by the following result.

\begin{pro}\label{several}
Let $\vfi$ be an increasing,  concave function with $\vfi(0)=0$ and upper dilation index
$\delta_\vfi<1$. For the corresponding Marcinkiewicz space $M(\varphi)$
the containment $L^1(m_{M(\varphi)})\subseteq [\mathcal{C},M(\varphi)]$ is strict.
\end{pro}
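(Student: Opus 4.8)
The plan is to exhibit a single function $f$ that lies in $[\ces,\marf]$ but not in its absolutely continuous part $[\ces,\marf]_a$; since the chain \eqref{inclusions} of Proposition~\ref{3.1} identifies $[\ces,\marf]_a$ with $L^1(m_{\marf})$, such an $f$ shows the containment $L^1(m_{\marf})\subseteq[\ces,\marf]$ to be strict.

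To build $f$, I would first record that the hypothesis $\delta_\vfi<1$ forces $\lim_{x\to0^+}x/\vfi(x)=0$. Indeed $g(x):=x/\vfi(x)$ is nondecreasing on $(0,1]$ (because $\vfi(x)/x$ is nonincreasing), so this limit exists; were it a positive number $L$, then $\vfi(1)\,x\le\vfi(x)\le x/L$ for $x\in(0,1]$, whence $\vfi(st)/\vfi(s)\asymp t$ for $s,st\in(0,1]$ and therefore $\delta_\vfi=1$, a contradiction. Now let $f$ be the (a.e.-defined, nonnegative) derivative of the nondecreasing function $g$. Then $f\in\ele$ since $\int_0^1 f\le g(1)=1/\vfi(1)$, and, $g$ being locally absolutely continuous on $(0,1]$ with $g(0^+)=0$, one gets $\int_0^x f(t)\,dt=g(x)$ for every $x\in(0,1]$; consequently $\ces(f)(x)=\tfrac1x\int_0^x f=1/\vfi(x)$ on $(0,1]$.

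That $f\in[\ces,\marf]$ is then immediate: $1/\vfi$ is nonincreasing, hence equals its own decreasing rearrangement, and the equivalent expression \eqref{norm-marz} for the norm of $\marf$ (available precisely because $\delta_\vfi<1$) gives $\|\ces(f)\|_{\marf}\asymp\sup_{0<t\le1}\vfi(t)\cdot\tfrac1{\vfi(t)}=1<\infty$; this is exactly where $\delta_\vfi<1$ is indispensable, since for $\vfi(t)=t$ one has $\delta_\vfi=1$ and $1/\vfi\notin\marf=\ele$. To see that $f\notin[\ces,\marf]_a$, take the sets $A_\delta:=[0,\delta]$, $0<\delta\le1$, of Lebesgue measure $\delta\to0$. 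For $0<x\le\delta$ one computes $\ces(f\chi_{A_\delta})(x)=\tfrac1x\int_0^x f=1/\vfi(x)$, so $\ces(f\chi_{A_\delta})\ge\tfrac1\vfi\chi_{A_\delta}$ pointwise; the right-hand side is nonincreasing, so by \eqref{norm-marz} again $\|\tfrac1\vfi\chi_{A_\delta}\|_{\marf}\asymp\sup_{0<t\le1}\vfi(t)\cdot\tfrac1{\vfi(t)}\chi_{A_\delta}(t)=1$. Monotonicity of the $\marf$-norm then yields $\|f\chi_{A_\delta}\|_{[\ces,\marf]}=\|\ces(f\chi_{A_\delta})\|_{\marf}\ge c_0>0$ with $c_0$ independent of $\delta$, so $\|f\chi_A\|_{[\ces,\marf]}$ does not tend to $0$ as $\lambda(A)\to0$, i.e. $f\notin[\ces,\marf]_a$. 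Combining this with $[\ces,\marf]_a=L^1(m_{\marf})$ finishes the argument.

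The one point requiring genuine care is the construction step: one must check that $x\mapsto x/\vfi(x)$ is regular enough — locally absolutely continuous on $(0,1]$ with limit $0$ at the origin — for its derivative $f$ to recover $\ces(f)=1/\vfi$ as an identity rather than merely an inequality, and it is exactly here that the hypothesis $\delta_\vfi<1$ (equivalently $\vfi'(0^+)=\infty$, i.e. $\vfi\not\asymp t$) enters. Everything after that is a routine application of the equivalent norm \eqref{norm-marz} for $\marf$ together with the monotonicity and ideal property of Banach function spaces.
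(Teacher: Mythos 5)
Your proof is correct, but it follows a genuinely different route from the paper's. The paper takes $1/\vfi$ itself as the witness: using \eqref{norm-marz} (valid because $\delta_\vfi<1$) it shows $1/\vfi\in\marf\setminus\marf_0$, observes that $\ces(1/\vfi)\asymp 1/\vfi$, and concludes $1/\vfi\in[\ces,\marf]\setminus[\ces,\marf_0]$, where $[\ces,\marf_0]=L^1(m_{\marf})$ by \eqref{inclusions}. You instead manufacture $f=(t/\vfi(t))'$ so that $\ces(f)$ equals $1/\vfi$ exactly (this requires $\lim_{t\to0}t/\vfi(t)=0$ and the local absolute continuity of $t/\vfi(t)$ on $(0,1]$, both of which you justify correctly), and then rule out membership in $L^1(m_{\marf})=[\ces,\marf]_a$ by testing the norm on $f\chi_{[0,\delta]}$ directly. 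Both arguments pivot on the same two ingredients, namely $1/\vfi\in\marf$ via \eqref{norm-marz} and the identification of $L^1(m_{\marf})$ with the absolutely continuous part coming from \eqref{inclusions}; what differs is the witness and the mechanism for non-membership (your direct failure of absolute continuity of the norm, versus the paper's explicit description of $\marf_a=\marf_0$ and the relation $\ces(1/\vfi)\asymp1/\vfi$). Your construction step is in fact avoidable: since $\vfi$ is increasing, $\ces(1/\vfi)(x)=\frac1x\int_0^x ds/\vfi(s)\ge 1/\vfi(x)$ pointwise, so $f=1/\vfi$ already works in your own absolute-continuity test, which is essentially the paper's shortcut. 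One small inaccuracy in an aside: $\delta_\vfi<1$ is \emph{not} equivalent to $\vfi\not\asymp t$ (e.g.\ $\vfi(t)=t\log(e/t)$ has $\vfi\not\asymp t$ but $\delta_\vfi=1$); only the implication you actually use, and prove, holds, so the argument itself is unaffected.
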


\begin{proof}
The  a.c.-part of the space $M(\varphi)$ is
$$
\marf_a=M(\varphi)_0:=\left\{f:\lim_{t\to0} \frac{\varphi(t)}{t}\int_0^tf^*(s)\,ds=0\right\}.
$$
The condition $\delta_\vfi<1$ allows us to use the equivalent expression
for the norm in $\marf$ given by   \eqref{norm-marz}. The function $1/\varphi$ is
decreasing and so $(1/\vfi)^*=1/\vfi$. It follows that $\|1/\varphi\|_{\marf}\asymp 1$
and hence, $1/\varphi\in\marf$. On the other hand,
$$
\frac{\varphi(t)}{t}\int_0^t\bigg(\frac{1}{\varphi}\bigg)^*(s)\,ds\ge
\frac{\varphi(t)}{t}\frac{t}{\varphi(t)}=1,\quad t\in(0,1],
$$
showing that $1/\varphi\not\in\marf_0$. So, $1/\varphi\in\marf\setminus \marf_0$.

To verify that $\mathcal{C}(1/\varphi)\asymp 1/\varphi$  is equivalent to showing that
$(\vfi(t)/t)\int_0^tds/\vfi(s) \asymp 1$. Since $1/\varphi$ is decreasing (i.e., $(1/\vfi)^*=1/\vfi$), this
is equivalent to verifying $\|1/\varphi\|_{\marf}\asymp 1$, that is, to showing that $1/\varphi\in\marf$.
But, we have just proved that this is indeed the case,
due to the condition $\delta_\varphi<1$.  Hence, $\ces(1/\varphi)\in\marf\setminus \marf_0$ which implies that
$1/\varphi\in [\ces,\marf]\setminus [\ces,\marf_0]$. From \eqref{inclusions} we have that
$L^1(m_{M(\varphi)})=L^1(m_{M(\varphi)_0})=[\mathcal{C},M(\varphi)_0]$. Consequently,
$1/\varphi\in [\mathcal{C},M(\varphi)]\setminus L^1(m_{M(\varphi)})$.
\end{proof}


\medskip

We now establish two  properties of $\cx$ that were alluded to in the Introduction.

\begin{thm}\label{reflexive}
Let $X\not=\linf$ be any r.i.\ space.
\begin{itemize}
\item[(a)] The space $\lmx$ is not reflexive. Hence,
the Ces\`{a}ro space $[\mathcal{C},X]$ is not reflexive either.
\item[(b)] The Ces\`{a}ro space $[\mathcal{C},X]$  is not r.i. Moreover, neither is $\lmx=\cx_a$.
\end{itemize}
\end{thm}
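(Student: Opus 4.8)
The plan is to prove both statements by exhibiting explicit bad behaviour inside $\lmx = \cx_a$, and then lifting to $\cx$ using the chain of inclusions \eqref{inclusions} together with the fact that $\cx_a$ is an order-closed ideal in $\cx$ carrying the same norm. For part (a), the strategy is to produce an isometric (or at least isomorphic) copy of a non-reflexive Banach space inside $\lmx$. The natural candidate is $\ele = L^1[0,1]$ itself, or a sublattice of it: recall from Theorem \ref{measure}(b)--(c) that $\mx$ has a Pettis integrable density $F$ with $\|F_y\|_X < \infty$ for every $y \in (0,1]$, but $\|F_y\|_X$ blows up as $y \to 0^+$ (since $F_y(x) = \tfrac1x\chi_{[y,1]}(x)$ and $X \neq \linf$). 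Concretely, I would fix an interval $[a,1]$ with $a>0$; on this interval the variation $\mxv$ is a finite measure equivalent to Lebesgue measure (by \eqref{equality}), and for functions supported in $[a,1]$ one checks $\|f\|_{\lmx} \asymp \|f \chi_{[a,1]}\|_{\ele}$ or something comparable — more precisely, the restriction of $\mx$ to $[a,1]$ has a Bochner integrable density, so $\lmx$ restricted to $[a,1]$ contains $L^1$ of a finite measure as a complemented sublattice. Since $L^1$ of a non-atomic finite measure is not reflexive, and a closed subspace (here even a complemented sublattice, hence a band) of a reflexive space would be reflexive, $\lmx$ is not reflexive. Then $\cx \supseteq \cx_a = \lmx$ is not reflexive either: a Banach space containing a non-reflexive closed subspace is non-reflexive.

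For part (b), the approach is to show that the norm of $\cx$ (equivalently of $\lmx = \cx_a$) cannot be rearrangement invariant by testing it on characteristic functions. If $\cx$ were r.i., then $\|\chi_A\|_{\cx}$ would depend only on $\lambda(A)$. But $\|\chi_A\|_{\cx} = \|\ces(\chi_A)\|_X$, and $\ces(\chi_A)$ depends strongly on the \emph{location} of $A$, not just its measure: if $A = [0,t]$ then $\ces(\chi_{[0,t]})(x) = 1$ for $x \le t$ and $= t/x$ for $x > t$, whereas if $A = [1-t,1]$ then $\ces(\chi_{[1-t,1]})(x) = 0$ for $x < 1-t$ and $= (x-(1-t))/x$ for $x \ge 1-t$, which is bounded by $t/(1-t)$ and is much smaller near the relevant range. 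Computing $\|\ces(\chi_{[0,t]})\|_X$ versus $\|\ces(\chi_{[1-t,1]})\|_X$ for small $t$ and showing they are not equal (indeed not even comparable, uniformly in $t$) contradicts rearrangement invariance. For the sharper assertion that $\lmx = \cx_a$ is not r.i., I would run the same computation but first verify that both test functions lie in $\cx_a$: $\chi_A \in \linf \subseteq X_a$ forces $\ces(\chi_A) \in X_a$, so $\chi_A \in \cx_a$ for every $A$, and the norms agree with those in $\cx$ by Proposition \ref{3.1}(a). Hence the same inequality of norms rules out r.i.-ness of $\cx_a$.

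The main obstacle I anticipate is part (a): producing a \emph{usable} non-reflexive subspace. The cleanest route is to avoid delicate identification of $\lmx$ and instead argue abstractly. Here is the sharpest version: since $\mx$ has $\sigma$-finite variation (Theorem \ref{measure}(c)) and $\lambda$ and $\mx$ share the same null sets (Remark \ref{2.2}(b)), the measure $\mx$ is not \emph{equivalent} to a finite scalar measure only through its variation, but on each piece $D_n = (2^{-n}, 2^{-(n+1)}]$ it behaves like a Bochner-density vector measure. A known criterion (see \cite[Ch.3]{okada-ricker-sanchez}) states that $L^1(m)$ is reflexive if and only if $m$ has relatively weakly compact range and $L^1(m)$ does not contain a copy of $c_0$ — but more directly, $L^1(m)$ reflexive forces $L^1(m)$ to be weakly sequentially complete and to not contain $\ele$; yet on $[a,1]$ our $\mx$ has a Bochner integrable density, hence $\chi_{[a,1]} \cdot L^1(m_X) \cong L^1$ of an equivalent finite measure, which contains $\ele$ isometrically. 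Thus $\lmx$ contains a copy of $\ele$ and cannot be reflexive. I would present this last argument as the core of (a), being careful to cite the precise statement from \cite{okada-ricker-sanchez} that a $\sigma$-additive vector measure with Bochner integrable density $F$ over a finite measure space has $L^1(m) = L^1(\|F\|\,d\lambda)$ with equivalence of norms. The only real care needed is the passage "$\lmx$ restricted to $[a,1]$ is a band in $\lmx$, and $\|\cdot\|_{\lmx}$ restricted there is equivalent to $\|\cdot\|_{L^1([a,1])}$" — which follows since for $f$ supported in $[a,1]$, $\|f\|_{\lmx} = \sup_{\|x^*\|\le 1}\int_{[a,1]}|f|\,d|x^*m_X| $ is sandwiched between $c\int_a^1 |f|$ and $\int_a^1|f|\,\|F_y\|_X\,dy \le C\int_a^1|f|$ using continuity of $y \mapsto \|F_y\|_X$ on $[a,1]$.
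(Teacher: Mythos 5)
Your strategy for (a) — locating a band of $\lmx$ that is lattice isomorphic to a non-atomic $L^1$ by restricting to an interval where the density $F$ is well behaved — is viable, but the key estimate is false as you state it. On $[a,1]$ the lower bound $c\int_a^1|f|\le\|f\|_{\lmx}$ fails near the right endpoint: since $\|f\|_{\lmx}=\|\ces(|f|)\|_X$ and $\ces(\chi_{[1-\epsilon,1]})\le\tfrac{\epsilon}{1-\epsilon}\,\chi_{[1-\epsilon,1]}$, one gets $\|\chi_{[1-\epsilon,1]}\|_{\lmx}\le\tfrac{\epsilon}{1-\epsilon}\,\fix(\epsilon)=o(\epsilon)$ as $\epsilon\to0^+$ (recall $\fix(0^+)=0$ because $X\ne\linf$), while $\int_a^1\chi_{[1-\epsilon,1]}=\epsilon$; equivalently, $\|F_y\|_X\le\tfrac1y\fix(1-y)\to0$ as $y\to1^-$, so no functional can give a uniform lower bound up to $y=1$. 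Moreover, the ``known criterion'' you want to cite — that a vector measure with a Bochner integrable density $F$ has $L^1(m)=L^1(\|F\|\,d\lambda)$ with equivalent norms — is not true in general: the correct statement identifies $L^1(|m|)$ with $L^1(\|F\|\,d\lambda)$, and $L^1(|m|)$ can be strictly smaller than $L^1(m)$, so the lower bound must be proved by hand for this particular measure. The repair is short: work on $[a,b]$ with $0<a<b<1$. There $\|F_y\|_X$ is bounded, so $\|f\|_{\lmx}\le\|f\|_{\lmxv}\le C\int_a^b|f|$ for $f$ supported in $[a,b]$; and for such $f\ge0$ one has $\ces(f)\ge\bigl(\int_a^b f\bigr)\chi_{[b,1]}$, whence $\|f\|_{\lmx}\ge\fix(1-b)\int_a^b f$ (alternatively, integrate against the single functional $\chi_{[b,1]}\in X'$, which gives $\langle F_y,\chi_{[b,1]}\rangle=\log(1/b)$ for $y\le b$). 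With this fix, extension by zero embeds $L^1([a,b],\lambda)$ isomorphically as a closed sublattice of $\lmx$, so $\lmx$, and hence $\cx\supseteq\cx_a=\lmx$ (a closed subspace, with equal norms by Proposition \ref{3.1}(a)), is not reflexive. This is genuinely different from the paper, which simply invokes the general fact that $L^1(m)$ of a non-atomic vector measure with $\sigma$-finite variation is never reflexive; your route is essentially a self-contained proof of that fact in the present situation.

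For (b), your test is correct and easy to quantify: $\|\chi_{[0,t]}\|_{\cx}\ge\fix(t)$ (since $\ces(\chi_{[0,t]})\ge\chi_{[0,t]}$) while $\|\chi_{[1-t,1]}\|_{\cx}\le\tfrac{t}{1-t}\fix(t)$, so for $0<t<1/2$ two equimeasurable functions, both lying in $\cx_a=\lmx$ with the same norms there, have different norms; since the ratio even tends to $0$, this contradicts the r.i.\ property as defined in the paper and in fact excludes any equivalent r.i.\ norm. Be aware, however, that this is weaker than what the paper proves: its proof constructs, from the fundamental function, an $f\in\cx$ with $\ces(f^*)\equiv\infty$, i.e.\ $f^*\notin\cx$, so the underlying set of $\cx$ is itself not rearrangement invariant — which is the form of the assertion conjectured by Le\'snik--Maligranda and proved by Astashkin--Maligranda for $X=L^p$. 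Your characteristic-function computation cannot yield that set-level conclusion, so if you intend the stronger statement you need an unbounded construction of the paper's type rather than a norm comparison.
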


\begin{proof}
(a) A general result concerning the $L^1$-space of a vector measure $m$ asserts that if  $m$
has  $\sigma$-finite variation and no atoms, then  $L^1(m)$ is not reflexive,
\cite[Remark p.3804]{curbera3},  \cite[Corollary 3.23(ii)]{okada-ricker-sanchez}.
Since this is the case for $\lmx$, which is a closed subspace of $\cx$,
it follows that $\cx$ is not reflexive either.

(b) Let $\varphi:=\fix$ be the fundamental function of $X$. Set
$f(t):=(-2\varphi^{-1/2})'(1-t)=\varphi'(1-t)/\varphi^{3/2}(1-t)$.
Since $f$ is an increasing function, $f^*(t)=\varphi'(t)/\varphi^{3/2}(t)$.
Direct computation shows that $\mathcal{C}f^*\equiv\infty$. Thus, $f^*\not\in[\mathcal{C},X]$.

On the other hand,
\begin{eqnarray*}
\mathcal{C}f(x)&=&\frac{1}{x}\int_0^x\frac{\varphi'(1-t)}{\varphi^{3/2}(1-t)}dt
= \frac{1}{x}\int_{1-x}^1\frac{\varphi'(s)}{\varphi^{3/2}(s)}\,ds
\\
&=&\frac{2}{x}
\left(\frac{1}{\varphi^{1/2}(1-x)}-\frac{1}{\varphi^{1/2}(1)}\right)
\\
&=& \frac{2}{\varphi^{1/2}(1)}\left(\frac{\varphi^{1/2}(1)-\varphi^{1/2}(1-x)}{x}\right)
\frac{1}{\varphi^{1/2}(1-x)}
\\
&:=&\frac{h(x)}{\varphi^{1/2}(1-x)}.
\end{eqnarray*}
Both of the functions $h$ and $1/h$ are bounded on $[0,1]$.
Accordingly, $\ces(f)$ is equivalent to $x\mapsto 1/\varphi^{1/2}(1-x)$.
So, $(\mathcal{C}f)^*(t)\asymp 1/\varphi^{1/2}(t)$. It follows that
$$
\|\mathcal{C}f\|_{\Lambda(\varphi)} \asymp \int_0^1 \frac{1}{\varphi^{1/2}(t)} \vfi'(t)\,dt
=2 \varphi^{1/2}(1)<\infty.
$$
Hence, $(\mathcal{C}f)^*\in \Lambda(\varphi)\subseteq X$, which implies that $f\in[\mathcal{C},X]$.
So, $\cx$ is not r.i.

According to \eqref{inclusions} we have $L^1(m_{X_a})=[\ces,X_a]$. Since $[\ces,X_a]$ fails to be r.i., so does
$L^1(m_{X_a})$.  But, $L^1(m_{X_a})=\lmx$. Accordingly, the closed subspace
$\lmx$ of $\cx$ is never r.i.
\end{proof}


\begin{rem}\label{wsc}
(a) A reasonable `substitute' for reflexivity is weak sequential completeness. If $X$ is
weakly sequentially complete, then $\cx$ is also weakly sequentially complete. Indeed,
the weak sequential completeness of $X$ implies that of $\lmx$, \cite[Corollary to Theorem 3]{curbera1}.
But, $\lmx=\cx$; see Proposition \ref{3.1}(a).

(b) Some further examples of vector measures $m$ for which the spaces $L^1(m)$ are known
not to be r.i.\ arise from Rademacher functions,
\cite[Theorem 1]{curbera4}, and from fractional integrals, \cite[Example 5.15(b)]{curbera-ricker1}.
\end{rem}


\medskip

We now address the question of when $\cx$ is order isomorphic to an \textit{AL-space}, that is, to a Banach lattice
in which the norm is additive over disjoint functions.
In this regard, the space $X=\ele$ exhibits a particular feature, namely, that
\begin{equation}\label{caseL1}
[\ces,L^1]=L^1(m_{L^1})=L^1(|m_{L^1}|)=L^1(\log(1/t) .
\end{equation}
We point out that not only do the three spaces $[\ces,L^1]$, $L^1(m_{L^1})$ and $L^1(|m_{L^1}|)$
coincide, but  that $[\ces,L^1]$ is also an AL-space.


\begin{pro}\label{AL}
Let $X\not=\linf$ be a r.i.\ space. The following conditions are equivalent.
\begin{itemize}
\item[(a)] The space $\cx$ is order isomorphic to an AL-space.
\item[(b)] The spaces $\lmx$ and $\lmxv$ are order isomorphic
via the natural inclusion (this latter condition is written as
$\lmx\simeq\lmxv$).
\item[(c)] The function $y\mapsto \|F_y\|_X$, $y\in[0,1]$, belongs to the associate space $\cx'$.
\end{itemize}

If any one of these conditions holds, then
$$
\cx=\lmx\simeq\lmxv.
$$
\end{pro}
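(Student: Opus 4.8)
The plan is to prove the cyclic chain of implications $(a)\Rightarrow(b)\Rightarrow(c)\Rightarrow(a)$, together with the final displayed conclusion, using the structural facts established in Theorem~\ref{measure} and Proposition~\ref{3.1}. The essential input is that $m_X$ has $\sigma$-finite, non-atomic variation $\mxv$ given by \eqref{var}, that $\lmx$ has absolutely continuous norm with $\lmxv\subseteq\lmx$ always holding (from \eqref{inclusions}), and that for any vector measure $m$ the space $L^1(m)$ is the largest B.f.s.\ with a.c.\ norm on which the integration operator is defined.

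\textbf{$(b)\Rightarrow(a)$ and the final conclusion.} If $\lmx\simeq\lmxv$ via the natural inclusion, then first note this forces $\lmx=\cx$: indeed $\lmxv$ is, by \eqref{var}, the $L^1$-space of the $\sigma$-finite scalar measure with density $y\mapsto\|F_y\|_X$, hence an AL-space, hence so is $\lmx$; being an AL-space it has a weak order unit and a.c.\ norm but in addition its norm is order continuous on \emph{all} of its Fatou completion, which will let me conclude via the characterization that $\cx''=\wlmx$ (the argument from Proposition~\ref{5.2}/the Fatou discussion) that no strictly larger space of Cesàro type can sit above it — more directly, an AL-space with a weak unit cannot contain itself properly as a closed subspace of finite codimension, and $\lmx$ is always a closed subspace of $\cx$, so one shows the containment $\lmx\subseteq\cx$ cannot be strict when $\lmx$ is AL. This gives $\cx=\lmx\simeq\lmxv$ and in particular $(a)$.

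\textbf{$(b)\Leftrightarrow(c)$.} This is the computational heart and I expect it to be the main obstacle. The natural inclusion $\lmxv\hookrightarrow\lmx$ is an order isomorphism onto $\lmx$ precisely when it is surjective with closed range, i.e.\ when there is a reverse norm estimate $\|f\|_{\lmxv}\le C\|f\|_{\lmx}$ for all $f\in\lmx$. Now $\|f\|_{\lmxv}=\int_0^1|f(y)|\,\|F_y\|_X\,dy=\int_0^1|f(y)|\,\|F_y\|_X\,dy$ by \eqref{var}, while by duality $\|f\|_{\lmx}=\|f\|_{\cx}$ (Proposition~\ref{3.1}(a)) and $\cx$ is a B.f.s.\ with associate space $\cx'$, so $\|f\|_{\cx}=\sup\{\int_0^1|f(y)g(y)|\,dy : \|g\|_{\cx'}\le 1\}$. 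Comparing, the estimate $\|\cdot\|_{\lmxv}\le C\|\cdot\|_{\cx}$ holds for all $f$ iff the single function $g_0(y):=\|F_y\|_X$ lies in $\cx'$ (with $\|g_0\|_{\cx'}\le C$); this is a standard duality argument (the linear functional $f\mapsto\int|f|g_0$ is bounded on the B.f.s.\ $\lmx=\cx_a$ exactly when $g_0\in(\cx_a)'=\cx'$, using Proposition~\ref{3.1} and that $\cx\neq\linf$ forces $(\cx_a)'=\cx'$). That is precisely condition $(c)$, and conversely $(c)$ gives the estimate and hence $(b)$.

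\textbf{$(a)\Rightarrow(b)$.} Suppose $\cx$ is order isomorphic to an AL-space. Since $\cx_a=\lmx$ is a closed ideal in $\cx$ and a closed ideal in an AL-space is again (order isomorphic to) an AL-space, $\lmx$ is an AL-space. The integration operator $\imx\colon\lmx\to X$ represents $\ces$ on $\cx_a$; an AL-space-valued $L^1(m)$ must coincide with $L^1(|m|)$ — this is the known fact that $L^1(m)=L^1(|m|)$ isometrically (up to equivalence of norms) iff $L^1(m)$ is lattice-isomorphic to an AL-space, equivalently iff $m$ has a Radon--Nikodým-type control by its variation — giving $\lmx\simeq\lmxv$, which is $(b)$. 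I would cite \cite[\S3]{okada-ricker-sanchez} or \cite{curbera1} for this equivalence. The only delicate point is ensuring the \emph{order} isomorphism (not merely Banach-space isomorphism) is what is available; but a Banach lattice order-isomorphic to an AL-space has the property that its norm is, up to equivalence, additive on disjoints, and this is exactly what forces $\|f\|_{\lmx}\asymp\int|f|\,d\mxv$, closing the loop.

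Thus the ordering of the write-up is: first record that \eqref{var} identifies $\lmxv$ as an ordinary (non-atomic) $L^1$ and hence an AL-space; then do $(c)\Leftrightarrow(b)$ by the duality computation above (the crux); then $(b)\Rightarrow(a)$ trivially; then $(a)\Rightarrow(b)$ via the AL-space $\Leftrightarrow$ $L^1(m)=L^1(|m|)$ dichotomy; and finally deduce $\cx=\lmx$ under any of the equivalent conditions from the maximality/weak-unit argument. The main obstacle is the $(c)\Leftrightarrow(b)$ duality step, where one must correctly pass between the pointwise expression $\|f\|_{\lmxv}=\int|f|\,\|F_y\|_X\,dy$, the vector-measure norm \eqref{norm-m} defining $\|f\|_{\lmx}$, and the associate-space description of $\|\cdot\|_{\cx}$, using $\|f\|_{\lmx}=\|f\|_{\cx}$ and $(\cx_a)'=\cx'$.
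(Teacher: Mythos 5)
Your handling of $(b)\Leftrightarrow(c)$ and of $(a)\Rightarrow(b)$ is essentially the paper's own argument. The equivalence $(b)\Leftrightarrow(c)$ is exactly the associate-space computation: using $\lmx=[\ces,X_a]=\cx_a$, $\|f\|_{\lmx}=\|f\|_{\cx}$, $(\cx_a)'=\cx'$ and $\|f\|_{\lmxv}=\int_0^1|f(y)|\,\|F_y\|_X\,dy$ from \eqref{var}; note that you do not need (and should not claim) that $\cx'$ is norming in $\cx^*$ — H\"older's inequality for associate spaces in one direction and the very definition of $(\cx_a)'$ in the other suffice. For $(a)\Rightarrow(b)$, your closed-ideal route to seeing that $\lmx=\cx_a$ inherits the AL-structure is a harmless variant of the paper's observation that an AL-space has a.c.\ norm (whence $\cx=\cx_a=\lmx$); both then rest on the same known theorem that $L^1(m)$ is lattice isomorphic to an AL-space iff $L^1(m)\simeq L^1(|m|)$, for which the relevant reference is \cite[Proposition 2]{curbera2}, not \cite{curbera1}.

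The genuine gap is in $(b)\Rightarrow(a)$ together with the final assertion $\cx=\lmx$. Given $\lmx\simeq\lmxv$, you must show that the containment $\lmx=\cx_a\subseteq\cx$ — which in general is strict, cf.\ Proposition \ref{several} — is an equality, and your ``more direct'' argument does not do this: the claim that an AL-space with a weak unit ``cannot contain itself properly as a closed subspace of finite codimension'' is irrelevant here, since there is no reason whatsoever for $\lmx$ to have finite codimension in $\cx$, and nothing about $\lmx$ being an AL-space by itself prevents $\cx$ from being strictly larger. The correct route (which you gesture at via $\cx''=\wlmx$ but never carry out) is: $\lmxv$, being the $L^1$-space of a nonnegative $\sigma$-finite scalar measure, is weakly sequentially complete; hence the order-isomorphic space $\cx_a=\lmx$ has the Fatou property, i.e.\ $\cx_a=(\cx_a)''$; since $\linf\subseteq\cx_a\neq\{0\}$ one has $(\cx_a)'=\cx'$ and therefore $(\cx_a)''=\cx''$; combining $\cx_a=\cx''$ with $\cx_a\subseteq\cx\subseteq\cx''$ forces $\cx=\cx_a=\lmx\simeq\lmxv$, which is an AL-space. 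Without this Fatou/weak-sequential-completeness step, the implication $(b)\Rightarrow(a)$ (which your summary calls ``trivial'' — it is not, precisely because of this point) and the final displayed equality remain unproved.
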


\begin{proof}
(a) $\Rightarrow$ (b) If $\cx$ is order isomorphic to an AL-space, then it is a.c.,
\cite[Theorem 1.a.5 and Proposition 1.a.7]{lindenstrauss-tzafriri}.
Hence, by \eqref{inclusions} we have that  $\cx=\cx_a=\lmx$ and
so $\lmx$ is order isomorphic to an AL-space. This last condition implies that
$\lmx$  is order isomorphic (via the natural inclusion) to $\lmxv$;
see Proposition 2 of \cite{curbera2} and its proof.

(b) $\Rightarrow$ (a) Suppose that $\lmx\simeq\lmxv$.
According to \eqref{inclusions} we have $\cx_a=\lmx$ and so
$\cx_a\simeq\lmxv$. Since $\lmxv$ is weakly sequentially complete,
it follows that $\cx_a$ has the Fatou property
and hence, that $\cx_a=(\cx_a)''$. Since $\cx_a\not=\{0\}$,  we have
$(\cx_a)'=\cx'$ and hence,
$(\cx_a)''=\cx''$. Thus, $\cx_a=\cx''$ which, together with
the chain of inclusions $\cx_a\subseteq \cx \subseteq \cx''$, yields
 $\cx=\cx_a$. Accordingly, $\cx\simeq\lmxv$ and this last space is an AL-space.

(b) $\Leftrightarrow$ (c) Due to \eqref{inclusions} we have $\lmx=[\ces,X_a]$.
Hence, the condition $\lmx\simeq\lmxv$ is equivalent to $[\ces,X_a]\simeq\lmxv$.
This, in turn, is equivalent to
the requirement
$$
\int_0^1|f(y)| \cdot\|F_y\|_{X}\,dy<\infty, \quad  f\in [\ces,X_a],
$$
which is precisely the condition that the function $y\mapsto\|F_y\|_{X}$
belongs to  the associate space $[\ces,X_a]'=\cx'$.
\end{proof}


In the sequel we will repeatedly use  the fact that $\mathcal{C}\colon X\to X$ (necessarily boundedly)
if and only if   $X\subseteq \cx$. For
r.i.\ spaces $X$ this corresponds precisely to  the upper Boyd index $\overline{\alpha}_X$ of $X$
satisfying  $\overline{\alpha}_X<1$; see \cite[II.6.7, Theorem 6.6]{krein-petunin-semenov}
or \cite[Remark 5.13]{maligranda}. Note that the proof given
in \cite[Theorem III.5.15]{bennett-sharpley} uses the Fatou property of $X$.
Observe that if $\mathcal{C}\colon X\to X$, then also $\ces\colon X_a\to X_a$.


\begin{thm}\label{L1}
Let $\vfi$ be an increasing,  concave function with $\vfi(0)=0$ and having non-trivial dilation indices
$0<\gamma_\vfi\le\delta_\vfi<1$.
\begin{itemize}
\item[(a)] For $X=\Lambda(\vfi)$ the B.f.s.\ $\cx$ is order isomorphic to an AL-space.
\item[(b)] For $X=\marf$ the B.f.s.\ $\cx$ is not order isomorphic to an AL-space.
\end{itemize}
\end{thm}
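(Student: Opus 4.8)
The strategy is to apply the criterion of Proposition \ref{AL} in both cases, reducing the problem to deciding whether the function $y\mapsto\|F_y\|_X$ lies in the associate space $\cx'$. The hypothesis $0<\gamma_\vfi\le\delta_\vfi<1$ guarantees that the upper Boyd indices of both $\Lambda(\vfi)$ and $M(\vfi)$ are strictly less than $1$ (the Boyd indices of a Lorentz or Marcinkiewicz space are governed by the dilation indices of $\vfi$), so $\ces$ acts boundedly on $X$ in each case; this in particular gives $X\subseteq\cx$, which will be needed to test membership in $\cx'$. A second crucial ingredient is the description of $\cx'$ due to L\'esnik and Maligranda: under the present restrictions on $\vfi$ one has an explicit identification of the associate space $[\ces,X]'$, which I will use to convert ``$y\mapsto\|F_y\|_X\in\cx'$'' into a concrete integrability condition.

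For part (a), $X=\Lambda(\vfi)$, I would start from the formula \eqref{normL} of Proposition \ref{variation-L}, namely $\|F_y\|_{\Lambda(\vfi)}=\int_0^{1-y}\frac{\vfi'(t)}{t+y}\,dt$. Using $\gamma_\vfi>0$ one can bound $\vfi'(t)/\vfi(t)\lesssim 1/t$ from below-type estimates and, more to the point, estimate $\int_0^{1-y}\frac{\vfi'(t)}{t+y}\,dt\asymp \vfi(y)/y + (\text{lower order})$, or more crudely show $\|F_y\|_{\Lambda(\vfi)}\asymp \log(1/y)\cdot(\text{something controlled by }\vfi)$ near $y=0$; the point is that the growth of $y\mapsto\|F_y\|_{\Lambda(\vfi)}$ as $y\to 0$ is mild enough (only a logarithmic-type blow-up modulated by $\vfi'$) that it is dominated, in the associate-space norm, by elements of $[\ces,\Lambda(\vfi)]'$. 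Concretely, I expect to show $y\mapsto\|F_y\|_{\Lambda(\vfi)}$ is equivalent to (a constant times) the decreasing rearrangement of $\log(1/y)$ composed with $\vfi$-weights, and that this lies in $M(\psi)$-type associate space via the condition $\delta_\vfi<1$. Once condition (c) of Proposition \ref{AL} is verified, the conclusion $\cx=\lmx\simeq\lmxv$ and hence that $\cx$ is order isomorphic to an AL-space follows immediately.

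For part (b), $X=M(\vfi)$, the aim is the opposite: to show $y\mapsto\|F_y\|_{M(\vfi)}\notin\cx'$, so that by the equivalence (a)$\Leftrightarrow$(c) of Proposition \ref{AL} the space $\cx$ is \emph{not} order isomorphic to an AL-space. Here I would first compute $\|F_y\|_{M(\vfi)}$ using \eqref{density*} and the equivalent norm \eqref{norm-marz} (legitimate since $\delta_\vfi<1$): one gets $\|F_y\|_{M(\vfi)}\asymp\sup_{0<t\le 1-y}\frac{\vfi(t)}{t+y}$, which, because $\vfi(t)/t$ is decreasing while $\vfi(t)$ increases, is comparable to $\vfi(y)/y$ for small $y$ — a genuine power-type blow-up (since $\gamma_\vfi>0$ forces $\vfi(y)/y\gtrsim y^{\delta_\vfi-1}$, so $\|F_y\|_{M(\vfi)}\to\infty$ like $y^{-\beta}$ for some $\beta\in(0,1)$). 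I would then pair this against a suitable test function in $\cx=[\ces,M(\vfi)]\supseteq M(\vfi)$ (using $\delta_\vfi<1$) — for instance $1/\vfi$ itself, which Proposition \ref{several}'s proof already shows lies in $[\ces,M(\vfi)]$ — and check that $\int_0^1 \frac{1}{\vfi(y)}\cdot\|F_y\|_{M(\vfi)}\,dy\asymp\int_0^1 \frac{1}{y}\,dy=\infty$. That divergent integral witnesses $y\mapsto\|F_y\|_{M(\vfi)}\notin[\ces,M(\vfi)]'=\cx'$, completing part (b).

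The main obstacle I anticipate is part (a): establishing membership in the associate space requires a precise enough two-sided estimate on $\|F_y\|_{\Lambda(\vfi)}$ together with an effective description of $[\ces,\Lambda(\vfi)]'$ from \cite{lesnik-maligranda-1}, and controlling the logarithmic factor against the $\vfi$-weights so that the lower index $\gamma_\vfi>0$ is actually used (it should be what rules out a logarithmic divergence). By contrast part (b) is comparatively soft, since there the supremum in the Marcinkiewicz norm produces a clean power-type lower bound on $\|F_y\|_{M(\vfi)}$ and pairing against $1/\vfi\in\cx$ yields the harmonic-series divergence with essentially no work.
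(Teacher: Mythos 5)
Your reduction of both parts to condition (c) of Proposition \ref{AL} is exactly the route the paper takes, and your argument for part (b) is essentially complete and in fact more elementary than the paper's: instead of computing $[\ces,\marf]'$ via \eqref{c*} and estimating a decreasing rearrangement there, you use the lower bound $\|F_y\|_{\marf}\ge \vfi(y)/(2y)$ for $0<y\le 1/2$ (which follows directly from \eqref{density*}, even without invoking \eqref{norm-marz}) and pair against the test function $1/\vfi$, which lies in $[\ces,\marf]$ by the proof of Proposition \ref{several} (this is where $\delta_\vfi<1$ enters). Since the associate space consists of the functions integrable against every element of $\cx$, the divergence of $\int_0^{1/2}\vfi(y)^{-1}\,\vfi(y)\,y^{-1}\,dy$ shows $y\mapsto\|F_y\|_{\marf}\notin\cx'$, and Proposition \ref{AL} finishes part (b). This is a correct and pleasantly short alternative to the paper's computation of $[\ces,\marf]'=\big(\Lambda(\psi)(\tfrac1{1-y})\big)^{\widetilde{}}$.

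Part (a), however, has a genuine gap: the decisive step, namely the verification that $y\mapsto\|F_y\|_{\laf}$ belongs to $[\ces,\laf]'$, is only announced (``I expect to show \dots''), and the asymptotics you guess do not identify the actual mechanism. Under your hypotheses the function $\|F_y\|_{\laf}=\int_0^{1-y}\vfi'(s)/(y+s)\,ds$ behaves like $\vfi(y)/y$ near $0$, not like a logarithmic expression; more importantly, membership in \eqref{c*} requires controlling, in $M(\psi)$ with $\psi(t)=t/\vfi(t)$, the decreasing rearrangement of $\|F_y\|_{\laf}/(1-y)$, and the factor $1/(1-y)$ destroys monotonicity near $y=1$ — an issue your plan never addresses. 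The paper resolves it by splitting this function as a decreasing part $g(y)=2\int_0^1 \vfi'(s)/(y+s)\,ds$ plus an increasing part $h(y)=2\vfi(1-y)/(1-y)$, applying $(g+h)^*(t)\le g^*(t/2)+h^*(t/2)$, and then proving the uniform bound $\sup_{0<t\le1}\tfrac{t}{\vfi(t)}\int_0^1\tfrac{\vfi'(s)}{t/2+s}\,ds<\infty$ by passing to the dilated integral $\int_0^{1/t}\tfrac{\vfi(tu)}{\vfi(t)}\tfrac{du}{u(1+u)}$, whose convergence uses \emph{both} indices: $\gamma_\vfi>0$ gives $\vfi(tu)/\vfi(t)\le u^{\alpha}$ for small $u$, while $\delta_\vfi<1$ gives $\le u^{\beta}$ for large $u$. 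In addition, $\gamma_\vfi>0$ is needed a second time, to ensure $\delta_\psi<1$ so that the simplified Marcinkiewicz norm \eqref{norm-marz} may be used for $\laf'=M(\psi)$. None of these steps appear in your proposal, so as written part (a) remains unproven.
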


\begin{proof}
Via Proposition \ref{AL}, we need to decide whether or not    $\lmx\simeq\lmxv$.

In \cite[Corollary 13]{lesnik-maligranda-1} Les\u{n}ik and Maligranda
identify the associate space of $\cx$ in the case
when $X$ has the Fatou property and both $\ces,\ces^*$ act boundedly on $X$.
Here $f\mapsto \ces^*(f)(x):=\int_x^1\frac{f(t)}{t}\,dt$,
$x\in[0,1]$, for any a.e.\ finite measurable function $f$
(denoted by $f\in L^0$) for which it is meaningfully defined, is the \textit{Copson operator}.
Then
\begin{equation}\label{c*}
\cx'= \left(X'\Big(\frac{1}{1-x}\Big)\right)^{\widetilde{}}
=\bigg\{f: y\mapsto\frac{\tilde{f}(y)}{1-y}\in X'\bigg\},
\end{equation}
where $\tilde{f}$ is the decreasing majorant of  $f$,
defined by $\tilde{f}(y):=\sup_{x\geqslant y}|f(x)|$ and, for a weight function
$0<w$ on $[0,1]$ and a B.f.s.\ Y, we set $Y(w):=\{h:wh\in Y\}$ and $\tilde{Y}:=\{g:\tilde{g}\in Y\}$.

(a) The identification \eqref{c*}  applies to $X=\laf$ as $X$ possesses  the
Fatou property and because $\underline{\alpha}_X=\gamma_\vfi$ and
$\overline{\alpha}_X=\delta_\vfi$, together with  the given index assumptions, imply that
$0<\underline{\alpha}_X \le \overline{\alpha}_X<1$ which, in turn, guarantees  that
$\ces,\ces^*\colon \laf\to \laf$ boundedly.

Since $\Lambda(\varphi)'=M(\psi)$,  for
$\psi(t):=t/\vfi(t)$, we have from  \eqref{c*} that
\begin{equation*}
[\ces,\laf]'
= \left(M(\psi)\Big(\frac{1}{1-y}\Big)\right)^{\widetilde{}}
=\bigg\{f:\sup_{0<t\le1}\frac{1}{\vfi(t)}
\int_0^t\Big(\frac{\tilde{f}(y)}{1-y}\Big)^*(s)\,ds<\infty\bigg\}.
\end{equation*}
The condition $0<\gamma_\vfi$ implies that $\delta_{\psi}<1$
which allows us, via \eqref{norm-marz}, to simplify the  previous description to
\begin{equation*}\label{dual-simple}
[\ces,\laf]'
=\bigg\{f:\sup_{0<t\le1}\frac{t}{\vfi(t)}
\Big(\frac{\tilde{f}(y)}{1-y}\Big)^*(t)<\infty\bigg\}.
\end{equation*}

We  need to verify that $y\mapsto \|F_y\|_{\Lambda(\varphi)}\in [\ces,\laf]'$;
see Proposition \ref{AL}. From \eqref{normL} it follows that
$$
\|F_y\|_{\Lambda(\varphi)}= \int_0^{1-y} \frac{\varphi'(s)}{y+s}\,ds.
$$
This function is decreasing (as a function of its variable $y$),
so it coincides with its decreasing majorant,
that is, $(\|F_y\|_{\Lambda(\varphi)})^{\widetilde{}}=\|F_y\|_{\Lambda(\varphi)}$.
Moreover, for $0<y\le1$, we have
\begin{eqnarray*}
\frac{\|F_y\|_{\Lambda(\varphi)}}{1-y}
&\le&
2 \chi_{[0,1/2]}(y)\int_0^{1} \frac{\varphi'(s)}{y+s}\,ds
+
\chi_{[1/2,1]}(y)\frac{2}{1-y}\int_0^{1-y} \varphi'(s)\,ds
\\ &\le &
2\int_0^{1} \frac{\varphi'(s)}{y+s}\,ds +2  \frac{\varphi(1-y)}{1-y}
\\ &:= &
g(y)+h(y).
\end{eqnarray*}
In the latter term, $g$ is decreasing and $h$  is increasing
due to the quasi-concavity of $\vfi$  (which implies that $\vfi(t)/t$ is decreasing), i.e.,
$g^*=g$ and $h^*(t)=h(1-t)$.  Using the property $(g+h)^*(t)\le g^*(t/2)+h^*(t/2)$
(see (2.23) in \cite[Ch.II \S2, p.67]{krein-petunin-semenov}),
it follows that
$$
\bigg(\frac{\|F_y\|_{\Lambda(\varphi)}}{1-y}\bigg)^*(t)\le
g\Big(\frac{t}{2}\Big) + h\Big(1-\frac{t}{2}\Big) =
2 \int_0^{1} \frac{\varphi'(s)}{\frac t2+s}\,ds +2 \frac{\varphi(t/2)}{t/2}.
$$
Accordingly,
$$
\sup_{0<t\le1}\frac{t}{\vfi(t)}\bigg(\frac{\|F_y\|_{\Lambda(\varphi)}}{1-y}\bigg)^*(t)
\le
2
\sup_{0<t\le1}\frac{t}{\vfi(t)} \int_0^{1} \frac{\varphi'(s)}{\frac t2+s}\,ds
+
4\sup_{0<t\le1}\frac{t}{\vfi(t)}\frac{\varphi(t/2)}{t } .
$$
The last term in the right-side is bounded (as $\vfi$  increasing implies $\vfi(t/2)/\vfi(t)\le1$)
and so we concentrate on the first term.
Due to the quasi-concavity of $\vfi$ we have $t\vfi'(t)\le \vfi(t)$.
This, together with a change of variables
yields, for $t\in(0,1]$, that
\begin{eqnarray*}
\frac{t}{\vfi(t)} \int_0^{1} \frac{\varphi'(s)}{\frac t2+s}\,ds
\le
\int_0^{1} \frac{\varphi(s)}{\vfi(t)}\frac{t}{s(\frac t2+s)} \,ds
\le
\int_0^{1/t} \frac{\varphi(tu)}{\vfi(t)}\frac{2du}{u(1+u)} =I_t.
\end{eqnarray*}
The conditions $0<\gamma_\varphi\le \delta_\varphi<1$ imply
that there exist $\alpha, \beta\in(0,1)$, and
$u_0, u_1$ with $0<u_0< 1<u_1<\infty$ such that
\begin{equation*}
\frac{\vfi(tu)}{\vfi(t)}\le u^\alpha,\quad 0<u<u_0,
\qquad \frac{\vfi(tu)}{\vfi(t)}\le u^\beta,\quad u>u_1,
\end{equation*}
\cite[pp.53-56]{krein-petunin-semenov}. Since  $\vfi(tu)/\vfi(t) \le \max\{1,u_1\}=u_1$,
for $u_0<u<u_1$ (via the quasi-concavity of $\vfi$), it follows that
\begin{equation*}
I_t \le  \int_0^{u_0} \frac{2u^\alpha du}{u(1+u)}
+
\int_{u_0}^{u_1}  u_1\frac{2du}{u(1+u)}
+
\int_{u_1}^{\infty} \frac{2u^\beta du}{u(1+u)},
\end{equation*}
which is finite as $0<\alpha,\beta<1$.
Thus,  $\|F_t\|_{\Lambda(\varphi)}\in [\ces,\laf]'$ and  so $L^1(m_{\laf})\simeq L^1(|m_{\laf}|)$.
Hence, $[\ces,\laf]$ is order isomorphic to an AL-space.

\medskip

(b) For $X=\marf$ the identification \eqref{c*}  can again be applied,
for the same reasons that it was applied in the case of $\laf$; see part (a).
In particular, both $\ces,\ces^*\colon \marf\to \marf$ boundedly.

Since $\marf'=\Lambda(\psi)$, for  $\psi(t):=t/\vfi(t)$, we have
from  \eqref{c*} that
\begin{equation*}
[\ces,\marf]'
=
\left(\Lambda(\psi)\Big(\frac{1}{1-y}\Big)\right)^{\widetilde{}}
=
\bigg\{f:\int_0^1 \Big(\frac{\tilde{f}(y)}{1-y}\Big)^*(t)\, \psi'(t)\,dt<\infty\bigg\}.
\end{equation*}

We  need to verify that $y\mapsto \|F_y\|_{\marf}\not\in [\ces,\marf]'$; see Proposition \ref{AL}.
Since the upper dilation index of
$\varphi$ satisfies $\delta_\varphi<1$,
we can use the equivalent expression \eqref{norm-marz} for the norm in
$\marf$  to obtain from \eqref{density*} that
$$
\|F_y\|_{\marf}\asymp \sup_{0\le s\le 1-y} \frac{\vfi(s)}{s+y} .
$$
This function is decreasing (as a function of its variable $y$) and  so it coincides with its decreasing majorant,
$(\|F_y\|_{\marf})^{\widetilde{}}=\|F_y\|_{\marf}$. Moreover, for each $y\in[0,1]$, we have
$$
\|F_y\|_{\marf}\asymp\sup_{0\le s\le 1-y} \frac{\vfi(s)}{s+y} \ge \vfi(1-y) ,
$$
and hence, modulo a positive constant,
$$
\frac{\|F_y\|_{\marf}}{1-y} \ge \frac{\vfi(1-y)}{1-y}  .
$$
Since $\vfi$ is quasi-concave, $\vfi(t)/t$ is decreasing and so
$\big(\frac{\vfi(1-t)}{1-y}\big)^*(t)=\vfi(t)/t$, i.e.,
$$
\left(\frac{\|F_y\|_{\marf}}{1-y}\right)^*(t) \ge \frac{\vfi(t)}{t} = \frac{1}{\psi(t)}.
$$
Accordingly, modulo a positive constant, we have
$$
\int_0^1 \bigg(\frac{(\|F_y\|_{\marf})^{\widetilde{}}}{1-y}\;\bigg)^*(t)\, \psi'(t)\,dt
\ge
\int_0^1 \frac{\psi'(t)}{\psi(t)}\,dt =\infty.
$$
Hence, $\|F_y\|_{\Lambda(\varphi)}\not\in [\ces,\marf]'$ and  so $L^1(m_{\marf})\not=L^1(|m_{\marf}|)$.
Consequently, $[\ces,\marf]$ is not order isomorphic to an AL-space.
\end{proof}

A precise description of  when $\cx$ is a weighted $L^1$-space
(in particular, an AL-space) can be deduced from  \cite[Theorem 3.3]{schep}.

\begin{rem}
(a) The argument at the beginning of the proof of $(a)\Rightarrow (b)$ in Proposition \ref{AL}
shows that  also  $[\ces,\marf_0]$ is not order isomorphic to an AL-space.

(b) If $\cx$ is order isomorphic to an AL-space, then Proposition \ref{AL} implies that $\cx=\lmx\simeq\lmxv$.
Thus, $\chi_{[0,1]}\in\lmxv$ and so $\mx$ has finite variation.
Hence, whenever $\mx$ has infinite variation (e.g. $X=L^p_{\textrm{exp}}$,
$p\ge1$, or if $\log(1/t)\not\in\laf$),
then $\cx$ cannot be order isomorphic to an AL-space.

(c) Further examples of when $\cx$ fails to be order isomorphic to an AL-space
occur in Proposition \ref{cc lmx} below.
\end{rem}


\medskip

The final results of this section address the question of when is $X$  contained
in  $\lmx$ or in $\lmxv$. In the first case, we have  the integral representation for $\ces\colon X\to X$
as given in \eqref{representationBDS}  via the Bartle-Dunford-Schwartz integral. In the latter case,
the representation for $\ces\colon X\to X$ is via the Bochner integral as given by \eqref{representation} and
\eqref{density}.

\begin{rem}\label{3.6}
(a) Let $X\not=\linf$ be a r.i.\ space such that $\overline{\alpha}_X<1$.
Then each of the containments $X\subseteq\cx$ and $X_a\subseteq L^1(\mx)$ is proper.
Indeed, since $\overline{\alpha}_X<1$, we have $X\subseteq\cx$, where $X$ is r.i.\ and $\cx$ is not; see
Theorem \ref{reflexive}(b). Thus,  $X=\cx$ is impossible.

Applying the previous  argument to $X_a$ (in place of $X$) shows that
$X_a\subseteq[\ces,X_a]$ properly. But, $[\ces,X_a]=\lmx$; see \eqref{inclusions}.

If, in addition, $X$ has a.c.\ norm, then $X_a=X$ and so $X\subseteq\lmx=\cx$ properly.

(b) Unlike for the containment $X_a\subseteq\lmx$, it is not true in general
(with $\overline{\alpha}_X<1$) that $X\subseteq\lmx$.
Indeed, for  $X=L^{p,\infty}$, $1<p<\infty$, we have
$$
X_a=L^{p,\infty}_0=\left\{f:\lim_{t\to0} t^{-1/q}\int_0^tf^*(s)\,ds=0\right\},\quad \frac1p+\frac1q=1.
$$
Since $\overline{\alpha}_X=1/p<1$, it follows from part (a)  that
$L^{p,\infty}_0\subseteq L^1(m_{L^{p,\infty}})$ properly.
To see that $L^{p,\infty}\not\subseteq L^1(m_{L^{p,\infty}})$ we consider,
as in the proof of Proposition \ref{several}, the decreasing function $x^{-1/p}\in L^{p,\infty}\setminus L_0^{p,\infty}$.
Since $\mathcal{C}(x^{-1/p})=qx^{-1/p}$, it follows  that
$x^{-1/p}\not\in  [\mathcal{C},L_0^{p,\infty}]$.
From \eqref{inclusions} we have $[\mathcal{C},L_0^{p,\infty}]=L^1(m_{L^{p,\infty}})$.
Accordingly, $x^{-1/p}\in L^{p,\infty}$ and
$x^{-1/p}\not\in L^1(m_{L^{p,\infty}})$.
\end{rem}


\medskip

Let $X=\Lambda(\vfi)$ satisfy $0<\gamma_\vfi\le\delta_\vfi<1$.
It follows from Proposition \ref{L1}(a) that
$\cx\simeq\lmxv$. Since $\overline{\alpha}_X=\delta_\vfi<1$,
we also have $X\subseteq\cx$ and hence,  $X\subseteq\lmxv$.
On the other hand, for $X=L^1$ we have the contrary situation that
$L^1\not\subseteq L^1(\log(1/t))=L^1(m_{L^1})$; see \eqref{caseL1}.
Of course, here $\overline{\alpha}_X=\delta_\vfi=1$.
A similar situation occurs for $X=L^p$, $1<p<\infty$,
namely $L^p\not\subseteq L^1(|m_{L^p}|)$, \cite[Theorem 1.1(ii)]{ricker}.
The following result exhibits additional facts concerning whether or not we have
$X\subseteq\lmxv$.

\begin{pro}\label{marf x and lmxv}
Let $X\not=\linf$ be a r.i.\ space.
\begin{itemize}
\item[(a)] It is always the case that $L^1\not\subseteq\lmxv$.
\item[(b)] Suppose  that $X\subseteq\lmxv$. Then the containment is necessarily proper.
\item[(c)] The containment $X\subseteq\lmxv$ holds if and only if the function
$$
y\mapsto \|F_y\|_X=\Big\|t\mapsto\frac{1}{t+y}\chi_{[0,1-y]}\Big\|_X, \quad y\in(0,1],
$$
belongs to the associate space $X'$ of $X$.
\item[(d)] For $X=\marf$, it is the case that  $\marf\not\subseteq L^1(|m_{\marf}|)$.
\end{itemize}
\end{pro}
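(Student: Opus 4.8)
The plan is to work throughout with the concrete realisation of $\lmxv$ as a weighted $\ele$-space. By Theorem \ref{measure}(c) the $\sigma$-finite variation $\mxv$ has $\lambda$-density $y\mapsto w(y):=\|F_y\|_X$, so that $\lmxv=\big\{f:\int_0^1|f(y)|\,w(y)\,dy<\infty\big\}$. Two elementary facts about $w$ will be used repeatedly. First, $w$ is non-increasing on $(0,1]$, because $F_{y_2}\le F_{y_1}$ $\lambda$-a.e.\ whenever $y_1<y_2$. Second, $w(y)\to0$ as $y\to1^-$ (since $F_y\le 2\chi_{[y,1]}$ for $y\ge 1/2$, so $0<w(y)\le 2\fix(1-y)\to0$ because $X\ne\linf$), while $w(y)\ge K^{-1}\|F_y\|_{\ele}=K^{-1}\log(1/y)$, where $K$ is the norm of the inclusion $X\subseteq\ele$, so $w$ is unbounded near $0$ and hence $w\notin\linf$.

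With this in hand, part (c) is immediate: by the description of $\lmxv$ just given, $X\subseteq\lmxv$ is equivalent to $\int_0^1|f(y)|\,w(y)\,dy<\infty$ for every $f\in X$, which is precisely the requirement that $w$ belong to the associate space $X'$; and since $X$ is r.i., \eqref{density*} shows that $w(y)=\|F_y\|_X=\|(F_y)^*\|_X$ equals the quantity displayed in the statement. Part (a) follows from the second fact above: if $\ele\subseteq\lmxv$ then $w\in(\ele)'=\linf$, contradicting the unboundedness of $w$ near $0$ (explicitly, $f(y):=\big(y\log^2(e/y)\big)^{-1}$ lies in $\ele\setminus\lmxv$).

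Part (b) is where the only real care is needed. Assume $X\subseteq\lmxv$ and suppose, for a contradiction, that $X=\lmxv$ as sets of functions. Since the associate space of a B.f.s.\ depends only on its underlying set of functions, this forces $X'=(\lmxv)'=\{g:g/w\in\linf\}$. Fix $\varepsilon\in(0,1/2)$: as $w$ is non-increasing with $w(\varepsilon)>0$, we have $\chi_{[0,\varepsilon]}/w\le w(\varepsilon)^{-1}$, so $\chi_{[0,\varepsilon]}\in X'$; whereas $\inf_{[1-\varepsilon,1]}w=0$ by the second fact above, so $\chi_{[1-\varepsilon,1]}/w\notin\linf$, i.e.\ $\chi_{[1-\varepsilon,1]}\notin X'$. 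Since $\chi_{[0,\varepsilon]}^*=\chi_{[1-\varepsilon,1]}^*$, this contradicts the fact that $X'$, being the associate space of the r.i.\ space $X$, is again rearrangement invariant; hence $X\subseteq\lmxv$ is necessarily proper. I expect this to be the main obstacle: a direct construction of a function in $\lmxv\setminus X$ would require controlling the shape of $w$ --- equivalently of $\lmxv$ --- near $0$, which for a general r.i.\ $X$ is impossible without a Fatou-type hypothesis, and the passage to associate spaces is exactly what circumvents this, converting the genuine failure of rearrangement invariance of $\lmxv=\ele(w)$ (caused by $w$ decreasing to $0$ at the right endpoint while staying bounded below near the left one) into a contradiction with the rearrangement invariance of $X'$.

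For part (d) put $\psi(t):=t/\vfi(t)$, so that $\marf'=\Lambda(\psi)$. If $m_{\marf}$ has infinite variation then $\chi_{[0,1]}\in\marf\setminus L^1(|m_{\marf}|)$ and there is nothing to prove. If $m_{\marf}$ has finite variation, then by part (c) it suffices to show that $w=(y\mapsto\|F_y\|_{\marf})$ does not belong to $\Lambda(\psi)$, equivalently that $\int_0^1 w^*(s)\,d\psi(s)=\infty$. Taking $t=y$ in the definition of the Marcinkiewicz norm gives $\|F_y\|_{\marf}\ge(\log 2)\,\vfi(y)/y=(\log 2)/\psi(y)$ for $y\in(0,1/2]$; since $w$ is non-increasing, $w^*=w$, whence
\begin{equation*}
\int_0^1 w^*(s)\,d\psi(s)\ \ge\ (\log 2)\int_0^{1/2}\frac{d\psi(s)}{\psi(s)}\ =\ (\log 2)\,\log\frac{\psi(1/2)}{\psi(0^+)}.
\end{equation*}
If $\psi(0^+)=0$ this is $+\infty$ and we are done. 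Otherwise $\psi$ is increasing and bounded between two positive constants on $(0,1]$, hence $\vfi(y)\asymp y$; then $\marf=\ele$ with an equivalent norm, $L^1(|m_{\marf}|)=\ele(|m_{\ele}|)$, and the conclusion reduces to part (a).
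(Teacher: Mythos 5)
Your argument is correct, and parts (a) and (c) coincide with the paper's proof (viewing $\lmxv$ as $L^1(w\,dy)$ with $w(y)=\|F_y\|_X$ and reading off the associate-space condition). Where you genuinely diverge is part (b): the paper argues that equality $X\simeq\lmxv$ would make the r.i.\ space $X$ order isomorphic to an AL-space, deduces $\fix(s+t)\asymp\fix(s)+\fix(t)$, hence $\fix(t)\asymp t$, hence $X\simeq L^1$, and then contradicts part (a); you instead pass to associate spaces, identify $(\lmxv)'=\{g: g/w\in\linf\}$, and use that $w$ is non-increasing with $w\to0$ at $1^-$ but bounded away from $0$ near $0$ to show this set contains $\chi_{[0,\varepsilon]}$ but not the equimeasurable $\chi_{[1-\varepsilon,1]}$, contradicting the rearrangement invariance of $X'$. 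Your route is more elementary (no AL-space or fundamental-function machinery, and no appeal to (a)), exposing directly the failure of rearrangement invariance of $\ele(w)$ at the right endpoint, whereas the paper's route yields the slightly more informative intermediate conclusion that equality would force $X\simeq L^1$. In (d) both proofs rest on the same lower bound $\|F_y\|_{\marf}\gtrsim\vfi(y)/y=1/\psi(y)$ and the divergence of $\int_0^{1/2}d\psi/\psi$; your extra case $\psi(0^+)>0$ (i.e.\ $\vfi(t)\asymp t$, so $\marf=L^1$ and the claim reduces to (a)) covers a degenerate situation the paper's displayed computation silently excludes, while your split into finite/infinite variation is harmless but unnecessary, since (c) is an equivalence and applies in either case.
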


\begin{proof}
(a) If $L^1\subseteq\lmxv$ holds, then $\int_0^1|f(y)|\cdot\|F_y\|_X\,dy<\infty$ for all $f\in L^1$ and so
$\sup_{0<y\le1}\|F_y\|_X<\infty$. However, this is impossible since, for each $y\in(0,1]$, we have
$$
\|F_y\|_X\ge \|F_y\|_{L^1}=\int_0^1\frac{1}{x}\chi_{[y,1]}(x)\,dx=\log(1/y).
$$

(b) If $X\simeq\lmxv$ holds, then $X$ is a r.i.\ space which is order isomorphic to an AL-space.
Then, for some constants $C_1,C_2>0$, we have $C_1\|f\|_{\lmxv}\le \|f\|_X\le C_2 \|f\|_{\lmxv}$, $f\in X$.
So, for $0\le s<t\le1$, we have
\begin{eqnarray*}
\|\chi_{[0,t]}\|_X &\le& C_2 \|\chi_{[0,t]}\|_{\lmxv} =C_2\big(\|\chi_{[0,s]}\|_{\lmxv}+\|\chi_{[s,t]}\|_{\lmxv}\big)
\\ &\le&
\frac{C_2}{C_1}\big(\|\chi_{[0,s]}\|_X+\|\chi_{[s,t]}\|_X\big)
=
\frac{C_2}{C_1}\big(\|\chi_{[0,s]}\|_X+\|\chi_{[0,t-s]}\|_X\big)
.
\end{eqnarray*}
In a similar way we can obtain the corresponding lower bound. It follows that the fundamental function $\fix$
satisfies $\fix(s+t)\asymp \fix(s)+\fix(t)$ for $s,t,s+t\in[0,1]$. This, together with the continuity
of $\fix$ on $[0,1]$ and $\fix(0)=0$, implies that $\fix(ta)\asymp t\fix(a)$ for $t, a, ta\in[0,1]$.
Hence, $\fix(t)\asymp t$ for $t\in[0,1]$, which implies that $X$ is order isomorphic to $L^1$.
But, this contradicts part (a).

(c) Note that $X\subseteq \lmxv$ if and only if $f\in\lmxv$ for all $f\in X$, that is (via Theorem \ref{measure}),
$$
\int_0^1|f(y)| \cdot\|F_y\|_X\,dy<\infty, \quad  f\in X,
$$
which corresponds to the function $y\mapsto\|F_y\|_X$ belonging to the space $X'$.
Since $X$ is r.i., it follows from \eqref{density*} that this is equivalent to the function
$$
y\mapsto \|(F_y)^*\|_X=\Big\|t\mapsto\frac{1}{t+y}\chi_{[0,1-y]}\Big\|_X, \quad y\in(0,1],
$$
belonging to $X'$.

(d) Applying part (c)  to $X=\marf$ we need to show
that $y\mapsto\|F_y\|_{\marf}$ does not
belong to $\marf'=\Lambda(\psi)$, for $\psi(t):=t/\vfi(t)$.

The function $y\mapsto\|F_y\|_{\marf}$ can be estimated below,
using  \eqref{density*}, for the values $0\le y\le 1/2$ (in which case $y\le 1-y$), namely
\begin{eqnarray*}
\|F_y\|_{\marf} &=& \sup_{0<t\le1} \frac{\vfi(t)}{t} \int_0^t (F_y)^*(s)\,ds
\\ &\ge&
\sup_{0<t\le1} \vfi(t)(F_y)^*(t)
=
\sup_{0<t\le1-y} \frac{\vfi(t)}{y+t}
\ge
\frac{\vfi(y)}{2y}.
\end{eqnarray*}
Hence, we have that
$$
(\|F_y\|_{\marf})^*(t)\ge \frac{\vfi(t)}{2t}=\frac12 \frac{1}{\psi(t)},\quad 0<t\le \frac12.
$$

Consequently,
\begin{equation*}
\big\|y\mapsto\|F_y\|_{\marf}\;\big\|_{\Lambda(\psi)}
=
\int_0^{1}(\|F_t\|_{\marf})^*(t) \psi'(t)\,dt
\ge
\frac12 \int_0^{1/2}  \frac{\psi'(t)}{\psi(t)}\,dt
= \infty.
\end{equation*}
\end{proof}

\begin{rem}
The proof of Proposition \ref{marf x and lmxv}(d) shows that the result also applies
to the a.c.\ part $\marf_0$ of $\marf$. More generally, for a r.i.\ space $X\not=\linf$ we have
$X_a\not\subset L^1(|m_{X_a}|)$  if and only if $X\not\subset\lmxv$ since
$X_a$ and $X$ have the same norm and $X_a'=X'$.
\end{rem}

Regarding the separability of $\cx$, it is known that the B.f.s.'
$Ces_p([0,1])$, $1<p<\infty$, are \textit{separable},
\cite[Theorem 1]{astashkin-maligranda1}, \cite[Theorem 3.1(b)]{astashkin-maligranda2}.
As pointed out in \cite[p.18]{astashkin-maligranda2}, this
is due to the fact that $Ces_p([0,1])$, which coincides with $[\ces,L^p]_a=[\ces,L^p]$, contains
$\linf$ and has a.c.\ norm. More generally, since $\linf\subseteq X_a$ for any r.i.\ space $X\not=\linf$
and $\ces\colon\linf\to\linf$, we necessarily have that $\linf\subseteq[\ces,X_a]=[\ces,X]_a$
(cf. Proposition \ref{3.1}) and hence, $[\ces,X]_a$ is separable by the a.c.\ of its norm.
In particular, if $X$ itself has a.c.\ norm, then $\cx$ is separable; see \eqref{inclusions}. Since
the $\sigma$-algebra $\mathcal{M}$ is $\lambda$-essentially countably generated and
$\lmx=\cx_a$, via \eqref{inclusions}, this also follows from a general result on the separability
of $L^1(m)$, \cite[Proposition 2]{ricker2}.


\section{The \ce operator acting on $\cx$}


It is known that the operator $\ces\colon \elp\to\elp$, for $1<p<\infty$,
is  not compact,  \cite[p.28]{leibowitz}.
Actually, this is a rather general feature.

\begin{pro}\label{4.1}
Let $X\not=\linf$ be a r.i.\ space satisfying $\overline{\alpha}_X<1$. Then the continuous
operator $\ces\colon X\to X$ is not compact.
\end{pro}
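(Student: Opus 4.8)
The plan is to exhibit a bounded sequence in $X$ whose image under $\ces$ has no norm-convergent subsequence. Since $\overline{\alpha}_X<1$ guarantees that $\ces\colon X\to X$ is bounded, and since $X\not=\linf$ forces $\lim_{t\to0}\fix(t)=0$, I would work with normalized characteristic functions supported near the origin. First I would set $g_n:=\chi_{[0,t_n]}/\fix(t_n)$ for a sequence $t_n\downarrow0$, so that $\|g_n\|_X=1$ for all $n$. A direct computation gives $\ces(g_n)(x)=\frac{1}{\fix(t_n)}\bigl(\chi_{[0,t_n]}(x)+\frac{t_n}{x}\chi_{[t_n,1]}(x)\bigr)$, so each $\ces(g_n)$ has a fixed "tail" profile $x\mapsto \frac{t_n}{\fix(t_n)x}$ on $[t_n,1]$.

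The key point is to show $\{\ces(g_n)\}$ is not relatively compact. I would do this by showing that no subsequence is Cauchy: for $m<n$ the difference $\ces(g_n)-\ces(g_m)$, restricted to the region $[t_n,t_m]$ (or an even simpler comparison interval bounded away from where both tails are comparable), has $X$-norm bounded below by a positive constant independent of $m,n$. The cleanest route is probably to choose the $t_n$ decreasing very rapidly, e.g. so that $\fix(t_{n+1})\le \fix(t_n)/2$, and then estimate $\|\ces(g_n)\chi_{[t_n, s]}\|_X$ from below using the ideal property and the r.i.\ structure: on $[t_n,s]$ one has $\ces(g_n)(x)\ge \frac{t_n}{\fix(t_n)s}$, whence $\|\ces(g_n)\chi_{[t_n,s]}\|_X\ge \frac{t_n}{\fix(t_n)s}\fix(s-t_n)$. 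Choosing $s$ comparable to $t_{n-1}$ and using quasi-concavity/monotonicity of $\fix$, together with the fact that $\ces(g_m)$ for $m<n$ is comparatively small on that same window, separates the tails. An alternative, and perhaps slicker, approach is to invoke Theorem~\ref{measure}(d): the range $\mx(\mathcal M)$ is relatively compact in $X$, but this does \emph{not} mean $\ces$ is compact; instead I would argue by contradiction using that a compact operator maps weakly convergent sequences to norm-convergent ones, and that $g_n\to0$ weakly in $X$ (since $\linf$ is weak-dense behaviour plus $\fix(t_n)\to0$) while $\|\ces(g_n)\|_X\not\to0$ because $\ces(g_n)\ge \chi_{[0,t_n]}/\fix(t_n)$ forces $\|\ces(g_n)\|_X\ge1$.

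I expect the main obstacle to be the weak-null verification (or, in the direct approach, making the lower bound on $\|\ces(g_n)-\ces(g_m)\|_X$ genuinely uniform). For the weak-null claim one needs that for every $h\in X'$, $\int_0^1 g_n h\,dx=\frac{1}{\fix(t_n)}\int_0^{t_n}h\,dx\to0$; since $h\in X'\subseteq L^1$ the integral $\int_0^{t_n}|h|\to0$, but one must also control the rate against $\fix(t_n)$. This is where one uses that $h\in X'$ implies $\|h\chi_{[0,t_n]}\|_{X'}\fix(t_n)\ge \bigl|\int_0^{t_n}h\bigr|$ is \emph{not} quite enough on its own; the correct statement is that $X'\subseteq \bigl(\linf\bigr)'$-type absolute continuity is unavailable in general, so instead I would pass to $X_a$: since $\linf\subseteq X_a$ with $(X_a)'=X'$ (as $X\neq\linf$), and since $g_n\in\linf$, one tests against $(X_a)^*=X'$ and uses that the a.c.\ norm of $X_a$ forces $\|g_n\|$-boundedness to imply weak compactness is \emph{false} — hence the contradiction approach needs the more robust route: use that $\ces$ compact would make $\ces(B_X)$ relatively compact, extract a convergent subsequence $\ces(g_{n_k})\to G$ in $X$, note $\ces(g_{n_k})\to 0$ $\lambda$-a.e.\ on every $[\varepsilon,1]$ (as $t_{n_k}\to0$), so $G=0$ $\lambda$-a.e., contradicting $\|\ces(g_{n_k})\|_X\ge 1$. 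This last argument avoids weak convergence entirely and is the version I would write up.
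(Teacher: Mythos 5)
Your final write-up version has the right skeleton --- $\|g_n\|_X=1$, the ideal property giving $\|\ces(g_n)\|_X\ge \|\fix(t_n)^{-1}\chi_{[0,t_n]}\|_X=1$, extraction of a norm-convergent subsequence from compactness, and passage to $\lambda$-a.e.\ limits via the continuous inclusion $X\subseteq L^1$ --- but the pivotal step is asserted, not proved. On $[\varepsilon,1]$ one has $\ces(g_{n_k})(x)=t_{n_k}/(\fix(t_{n_k})\,x)$ once $t_{n_k}<\varepsilon$, so the claim ``$\ces(g_{n_k})\to0$ a.e.\ on $[\varepsilon,1]$ (as $t_{n_k}\to0$)'' is exactly the statement that $t/\fix(t)\to0$, i.e.\ $\fix(t)/t\to\infty$, as $t\to0$. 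That is \emph{not} a consequence of $t_{n_k}\to0$ alone: for $X=L^1$ one has $t/\fix(t)\equiv1$ and the tails converge pointwise to $1/x$, so your limit $G$ need not vanish and no contradiction arises. This is precisely the point where the hypothesis $\overline{\alpha}_X<1$ has to enter, and your proposal never uses it beyond boundedness of the operator. The gap is repairable: since $\fix(t)/t$ is decreasing, $t/\fix(t)$ increases, so if it did not tend to $0$ then $\fix(t)\asymp t$ on $[0,1]$; but then $\|\chi_{[0,\epsilon]}\|_X\asymp\epsilon$ while $\|\ces(\chi_{[0,\epsilon]})\|_X\gtrsim\|\ces(\chi_{[0,\epsilon]})\|_{L^1}\ge\epsilon\log(1/\epsilon)$, so $\ces$ would be unbounded on $X$, contradicting $\overline{\alpha}_X<1$ (alternatively, invoke the standard inequality $\delta_{\fix}\le\overline{\alpha}_X<1$ for the dilation index of the fundamental function). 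With that lemma inserted your argument closes; the earlier detours (the Cauchy-separation sketch with rapidly decreasing $t_n$, and the weak-null discussion, which as written is confused and in any case unnecessary) should simply be deleted.

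For comparison, the paper's proof is entirely different and much shorter: for every $\alpha\ge0$ the function $x^\alpha$ lies in $\linf\subseteq X$ and satisfies $\ces(x^\alpha)=x^\alpha/(\alpha+1)$, so every point of $(0,1]$ is an eigenvalue of $\ces$; since the spectrum of a compact operator is at most countable with $0$ as its only possible accumulation point, $\ces\colon X\to X$ cannot be compact. That route needs no asymptotics of $\fix$ at all. Your (patched) argument is more concrete --- it exhibits an explicit normalized sequence concentrating at the origin whose images admit no convergent subsequence --- but it is longer and requires the extra fact about $\fix(t)/t$, so be aware of the trade-off.
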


\begin{proof}
For each $\alpha\ge0$, direct calculation  shows that the continuous
function $x^\alpha$ (on $[0,1]$) satisfies $\ces(x^\alpha)=x^\alpha/(\alpha+1)$ and so $1/(\alpha+1)$ is an
eigenvalue of $\ces$. Accordingly, the interval $(0,1]$ is contained in the spectrum of $\ces$ and so $\ces$ cannot
be compact.
\end{proof}

Since the operator $\ces\colon X\to X$, whenever it is available,
factorizes through $\lmx$ via $\imx\colon\lmx\to X$, it follows
that also $\imx$ is not compact. By the same argument also
$\ces\colon\cx\to X$ fails to be compact.
Actually, the requirement that $\ces\colon X\to X$ is unnecessary.

\begin{pro}\label{compact}
Let $X\not=\linf$ be any r.i.\ space. Then  the operator $\ces\colon\cx\to X$ is not compact.
\end{pro}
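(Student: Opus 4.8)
The plan is to exhibit, inside the unit ball of $\cx$, a sequence whose image under $\ces$ has no norm-convergent subsequence in $X$. The natural candidate is the family of normalized characteristic functions of small intervals shrinking to the origin. Concretely, for $t\in(0,1)$ put $g_t:=\chi_{[0,t]}/\|\chi_{[0,t]}\|_{\cx}$, so that $\|g_t\|_{\cx}=1$. Since $\ces(\chi_{[0,t]})(x)=\min\{1,t/x\}$, we have $\ces(g_t)=\min\{1,t/x\}/\|\chi_{[0,t]}\|_{\cx}$. The first step is to compute, or at least estimate, $\|\chi_{[0,t]}\|_{\cx}=\|\ces(\chi_{[0,t]})\|_X$ via the fundamental function: $\ces(\chi_{[0,t]})=\chi_{[0,t]}+ t\,(1/x)\chi_{[t,1]}$, and this function is decreasing, so its $X$-norm can be bounded below by $\fix(t)$ and above by $\fix(t)+t\|(1/x)\chi_{[t,1]}\|_X$. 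The point is that $\|\chi_{[0,t]}\|_{\cx}\to 0$ as $t\to 0^+$ (using $\fix(t)\to0$, valid since $X\neq\linf$), so the normalization is blowing up.

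Next I would show that no subsequence of $(\ces(g_t))_{t\to0^+}$ converges in $X$. The key observation is that for $0<s<t$ the functions $\ces(g_s)$ and $\ces(g_t)$ are "far apart" in $X$: on the interval $[0,s]$ one has $\ces(g_t)(x)=t/(x\|\chi_{[0,t]}\|_{\cx})$ while $\ces(g_s)(x)=1/\|\chi_{[0,s]}\|_{\cx}$, and by choosing $s$ much smaller than $t$ the difference $\ces(g_t)-\ces(g_s)$, restricted appropriately, has $X$-norm bounded away from $0$. More precisely, pick a sequence $t_n\downarrow 0$ decreasing fast enough (e.g. $t_{n+1}\le t_n^2$ or faster, chosen so that $\|\chi_{[0,t_{n+1}]}\|_{\cx}$ is a tiny fraction of $\|\chi_{[0,t_n]}\|_{\cx}$); then for $m<n$, on the set $[0,t_n]$ we have $\ces(g_{t_m})\le t_m/(t_n\|\chi_{[0,t_m]}\|_{\cx})$, which I want to be small compared to $\ces(g_{t_n})\ge 1/\|\chi_{[0,t_n]}\|_{\cx}$ on that same set; hence $\|\ces(g_{t_m})-\ces(g_{t_n})\|_X\ge c\,\fix(t_n)/\|\chi_{[0,t_n]}\|_{\cx}\ge c'>0$ for a constant independent of $n$. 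Therefore $(\ces(g_{t_n}))_n$ is a bounded sequence in $\cx$-norm one whose image is $c'$-separated in $X$, ruling out a convergent subsequence.

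The main obstacle I expect is the bookkeeping in the last estimate: one must choose the rate at which $t_n\to 0$ carefully so that simultaneously (i) the "tail" contribution $t_m\|(1/x)\chi_{[t_m,1]}\|_X$ does not overwhelm the leading term $\fix(t_m)$ in $\|\chi_{[0,t_m]}\|_{\cx}$, and (ii) the cross term $t_m/(t_n\|\chi_{[0,t_m]}\|_{\cx})$ is genuinely negligible next to $1/\|\chi_{[0,t_n]}\|_{\cx}$. Both can be arranged because $\fix$ is quasi-concave (so $\fix(t_m)/\fix(t_n)\le t_m/t_n$ would go the wrong way, but $\fix(t_m)\ge \fix(t_n)$ when $t_m>t_n$ and $\fix(t_m)/t_m$ is decreasing gives the needed control) and because, by passing to $X_a\supseteq\linf$, one may if necessary work with the closure of $\linf$; alternatively one can sidestep delicate norm computations by invoking Theorem~\ref{measure}(b)--(c) and the fact that $\imx\colon\lmx\to X$ is the restriction of $\ces$, reducing non-compactness of $\ces\colon\cx\to X$ to non-compactness of $\imx$, which in turn follows from the non-atomicity and $\sigma$-finite variation of $\mx$ together with a standard argument that the integration operator of a non-atomic vector measure is never compact. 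Either route works; I would present the direct interval-based argument as the primary one and remark that the factorization through $\imx$ gives an alternative.
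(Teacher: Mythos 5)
Your direct argument has a genuine gap at the separation estimate. A small slip first: for $0<s<t$ and $x\in[0,s]$ one has $\ces(\chi_{[0,t]})(x)=1$, not $t/x$ (the formula $t/x$ only holds for $x\ge t$), so on $[0,t_n]$ both $\ces(g_{t_m})$ and $\ces(g_{t_n})$ are constants, namely $1/\|\chi_{[0,t_m]}\|_{\cx}$ and $1/\|\chi_{[0,t_n]}\|_{\cx}$. More seriously, your final lower bound rests on the claim $\fix(t_n)/\|\chi_{[0,t_n]}\|_{\cx}\ge c'>0$. This ratio depends only on $t_n$, not on how fast the sequence decreases, so no choice of rapidly decreasing $t_n$ can rescue it; and it is false in general: for $X=L^1$ one has $\|\chi_{[0,t]}\|_{[\ces,L^1]}=t\bigl(1+\log(1/t)\bigr)$ while $\varphi_{L^1}(t)=t$, so the ratio tends to $0$. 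The reason is that for such $X$ almost all of the $X$-mass of $\ces(g_{t_n})$ lives on the tail $(t_n,1]$, where it equals $b_n/x$ with $b_n:=t_n/\|\chi_{[0,t_n]}\|_{\cx}$, so restricting the difference to $[0,t_n]$ loses the estimate. The underlying idea is salvageable, but by a different mechanism: since $\|\ces(g_{t_n})\|_X=1$ and $X\subseteq\ele$, a norm-convergent subsequence would admit a further a.e.-convergent subsequence; because $\ces(g_{t_n})=b_n/x$ on $(t_n,1]$ with $(b_n)$ bounded, any a.e.\ limit has the form $\beta/x$, which belongs to $X\subseteq L^1[0,1]$ only if $\beta=0$, contradicting that the norm limit has norm $1$. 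It is this argument, not the $[0,t_n]$-estimate, that closes the gap.

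Your fallback route is also not sound as stated: there is no theorem asserting that the integration operator of a non-atomic vector measure with $\sigma$-finite variation is never compact. For instance, $m(A):=\lambda(A)\,x_0$ for a fixed $x_0\in X$ is non-atomic with finite variation, yet $I_m$ is rank one, hence compact. This is exactly why the paper, after reducing to $\imx$ on the closed subspace $\lmx$ of $\cx$, argues in two cases via the compactness criteria of Okada--Ricker--Rodr\'iguez-Piazza: if $\mxv$ is infinite, finite variation is a necessary condition for compactness of $\imx$; if $\mxv$ is finite, compactness would force the normalized densities $F_y/\|F_y\|_X$ to form a relatively compact set in $X$, which fails because $F_y=(1/x)\chi_{[y,1]}$ tends to $0$ pointwise as $y\to1^-$ while each normalized function has norm one. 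So the reduction to $\imx$ does work, but only with these specific ingredients; non-atomicity and $\sigma$-finite variation alone do not suffice.
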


\begin{proof}
According to  \cite[Theorem 4]{okada-ricker-rpiazza1}, the bounded variation of $\mx$ is
a necessary condition for  $\imx\colon\lmx\to X$ to be compact.
Thus, if $\mx$ has infinite variation,  then $\imx\colon\lmx\to X$ is not compact.
Since the restriction of $\ces\colon\cx\to X$ to the closed subspace $\lmx$ is $\imx$, also
$\ces\colon\cx\to X$ fails to be compact.

Suppose now that $\mx$ has finite variation. Then a further condition is necessary for $\imx\colon\lmx\to X$ to be compact:
the existence of a Bochner integrable density, in our case the function $F\colon y\mapsto F_y$, with the property that
the set $\mathcal{B}:=\{G(y):=F_y/\|F_y\|_X, 0\le y\le1\}$ is relatively compact in $X$,
\cite[Theorem 1]{okada-ricker-rpiazza1}. So, assume then that this last condition holds.
Choose a sequence $\{y_n\}\subseteq[0,1]$ which increases to 1.
Since $\{G_{y_n}\}\subseteq \mathcal{B}$, there is a subsequence, again denoted  by
 $\{G_{y_n}\}$ for convenience, which converges in $X$. Let $\psi\in X$ be the limit of $\{G_{y_n}\}$.
By passing to a subsequence, if  necessary, we can assume that $G_{y_n}(x)\to \psi(x)$ for a.e.\ $x\in[0,1]$.
Recall that  $F_y$ is given by $F_y(x)=(1/x)\chi_{[y,1]}(x)$; see \eqref{density}.
Thus, as $\{y_n\}$ increases to $1$ we have $F_{y_n}(x)\to0$ for a.e. $x\in[0,1]$. The same property occurs also for
$\{G_{y_n}\}$. As a consequence, $\psi=0$ a.e. This contradicts  $\|\psi\|_X=1$ as $\|G_{y_n}\|_X=1$ for all $n\ge1$.
\end{proof}


A useful substitution for compactness
is the  \textit{complete continuity} of an  operator, that is, one
which maps weakly convergent sequences to norm convergent sequences.
In view of the Eberlein-\u{S}mulian Theorem, this is equivalent to mapping relatively weakly compact
sets to relatively norm compact sets.
For the particular case of the \ce operator, due to the fact that the vector measure $\mx$ has relatively compact range and
$\sigma$-finite variation (cf. Theorem \ref{measure}(c), (d)) it is the case that the (restricted) integration operator
$\imx\colon\lmxv\to X$ is always completely continuous, \cite[Proposition 3.56]{okada-ricker-sanchez}.
This fact will have important consequences.

The following result should be compared with Proposition \ref{4.1}.

\begin{pro}\label{cc lmxv}
Let $X\not=\linf$ be a r.i.\ space such that the  function $y\mapsto\|F_y\|_X$ belongs to  $X'$.
Then $\ces\colon X\to X$ is  completely continuous.

In particular, this occurs for
$X=\Lambda(\vfi)$ if $\varphi$ satisfies $0<\gamma_\varphi\le \delta_\varphi<1$.
\end{pro}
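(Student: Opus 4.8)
The plan is to reduce the complete continuity of $\ces\colon X\to X$ to the already established complete continuity of the restricted integration operator $\imx\colon\lmxv\to X$. The hypothesis that $y\mapsto\|F_y\|_X$ belongs to $X'$ is, by Proposition \ref{marf x and lmxv}(c), exactly the statement $X\subseteq\lmxv$; moreover it forces $\mx$ to have finite variation (since then $\chi_{[0,1]}\in\lmxv$), and in particular $\overline{\alpha}_X\le1$. First I would check that $\ces\colon X\to X$ is a bounded operator at all: the containment $X\subseteq\lmxv\subseteq\lmx=\cx_a\subseteq\cx$ from \eqref{inclusions} says precisely that $\ces(|f|)\in X$ for every $f\in X$, i.e. $X\subseteq\cx$, which is the criterion recalled in the text for $\ces\colon X\to X$ to act boundedly. (Equivalently one notes $\overline{\alpha}_X<1$, which follows from $X\subseteq\lmxv$ together with $\lmxv\ne L^1$ by Proposition \ref{marf x and lmxv}(a), ruling out $\overline{\alpha}_X=1$.)

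Next I would exhibit the factorization. On $\lmx=\cx_a$ the \ce operator coincides with the integration operator $\imx$ (formula \eqref{representationBDS}), and on the smaller space $\lmxv$ it is the restriction $\imx|_{\lmxv}$. Since $X\subseteq\lmxv$ with a continuous inclusion $j\colon X\hookrightarrow\lmxv$ (continuity is automatic: both are B.f.s.' and the inclusion is a positive linear map between Banach lattices, hence bounded by the closed graph theorem), we get the commuting diagram
\begin{equation*}
\ces\colon X\xrightarrow{\ j\ }\lmxv\xrightarrow{\ \imx\ }X,
\end{equation*}
because for $f\in X$ one has $\ces(f)=\int f\,d\mx=\imx(jf)$ — the pointwise-defined $\ces(f)$ agrees with the Bartle--Dunford--Schwartz integral on $\lmx\supseteq\lmxv\supseteq X$.

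Finally I would invoke the fact recalled just before the statement: because $\mx$ has relatively compact range and $\sigma$-finite variation (Theorem \ref{measure}(c),(d)), the operator $\imx\colon\lmxv\to X$ is completely continuous, \cite[Proposition 3.56]{okada-ricker-sanchez}. A completely continuous operator precomposed with any bounded operator is again completely continuous (a bounded operator sends weakly convergent sequences to weakly convergent sequences, which $\imx$ then sends to norm convergent ones). Hence $\ces=\imx\circ j\colon X\to X$ is completely continuous. For the "in particular" clause, when $X=\Lambda(\varphi)$ with $0<\gamma_\varphi\le\delta_\varphi<1$, the computation in the proof of Theorem \ref{L1}(a) already shows $y\mapsto\|F_y\|_{\Lambda(\varphi)}\in[\ces,\Lambda(\varphi)]'$, and a fortiori this function lies in the larger associate space $\Lambda(\varphi)'$ (since $X\subseteq\cx$ gives $\cx'\subseteq X'$), so the hypothesis is met. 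The main point requiring care is simply the identification "$y\mapsto\|F_y\|_X\in X'$ $\iff$ $X\subseteq\lmxv$" and the attendant verification that the inclusion $X\hookrightarrow\lmxv$ is continuous so that the factorization is a composition of bounded maps; everything else is a citation.
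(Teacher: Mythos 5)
Your proposal is correct and follows essentially the same route as the paper: identify the hypothesis with $X\subseteq\lmxv$ via Proposition \ref{marf x and lmxv}(c), factorize $\ces$ as the continuous inclusion $X\hookrightarrow\lmxv$ followed by the completely continuous operator $\imx\colon\lmxv\to X$, and invoke the ideal property of completely continuous operators, with the Lorentz case obtained from Theorem \ref{L1}(a) (the paper passes through Proposition \ref{AL}, you quote the $\|F_y\|_{\Lambda(\varphi)}$ computation directly --- an immaterial difference). Only your parenthetical alternative justification of $\overline{\alpha}_X<1$ via Proposition \ref{marf x and lmxv}(a) is shaky (it only rules out $X=L^1$, not $\overline{\alpha}_X=1$), but it is redundant since your main argument $X\subseteq\lmxv\subseteq\cx$ already yields the boundedness of $\ces\colon X\to X$.
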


\begin{proof}
By Proposition \ref{marf x and lmxv}(c) the function $y\mapsto\|F_y\|_X$ belonging to  $X'$
implies that $X\subseteq\lmxv$. According to \eqref{inclusions} we have $X\subseteq\cx$ and so the operator
$\ces\colon X\to X$ is  continuous. Moreover, it can be factorized via the continuous inclusion
$X\subseteq\lmxv$ and the restricted integration operator $\imx\colon\lmxv\to X$.
But, as noted above, $\imx\colon\lmxv\to X$ is necessarily completely continuous.
The ideal property of completely continuous operators then implies
that $\ces\colon X\to X$ is also completely continuous.

The particular case of $X=\Lambda(\vfi)$ with $0<\gamma_\varphi\le \delta_\varphi<1$ follows from
Proposition \ref{AL} and Theorem \ref{L1}(a).
\end{proof}

\begin{rem}
Any r.i.\ space $X$ for which the function $y\mapsto\|F_y\|_X$ belongs to $X'$ cannot be reflexive. For, if so, then
$\ces\colon X\to X$ is a completely continuous operator defined
on a reflexive Banach space and hence,  it is necessarily compact (which contradicts Proposition \ref{4.1}).
For $X=L^p$, $1<p<\infty$, this was shown explicitly in (the proof of)
Theorem 1.1(i) in \cite{ricker}.
\end{rem}


We deduce some further consequences from the complete continuity of the restricted integration operator
$\imx\colon\lmxv\to X$.

\begin{pro}\label{cc lmx}
Let $X\not=\linf$ be a r.i.\ space.
\begin{itemize}
\item[(a)]If $\cx$ is order isomorphic to an AL-space, then $\ces\colon\cx\to X$ is completely continuous.
\item[(b)] Let $X$ be  reflexive and satisfy $\overline{\alpha}_X<1$. Then
$\ces\colon\cx\to X$ is not completely continuous. In particular,
$\cx$  cannot be order isomorphic to an AL-space.
\item[(c)] Suppose that $X$  does not contain a copy of $\ell^1$. If
$\ces\colon\cx\to X$ is  completely continuous,
then  $\cx$ is  order isomorphic to an AL-space.
\end{itemize}
\end{pro}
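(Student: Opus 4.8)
The three parts will be handled using the complete continuity of the restricted integration operator $\imx\colon\lmxv\to X$ (valid by \cite[Proposition 3.56]{okada-ricker-sanchez} since $\mx$ has relatively compact range and $\sigma$-finite variation), together with the structural facts from Proposition \ref{AL}, Proposition \ref{4.1}, and the chain \eqref{inclusions}.

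\textbf{Part (a).} Suppose $\cx$ is order isomorphic to an AL-space. By Proposition \ref{AL} this gives $\cx=\lmx\simeq\lmxv$, so the operator $\ces\colon\cx\to X$ coincides (via $\imx$ and the order isomorphism $\lmx\simeq\lmxv$) with $\imx\colon\lmxv\to X$ composed with a topological isomorphism. Since $\imx\colon\lmxv\to X$ is completely continuous, and complete continuity is preserved under composition with bounded operators, $\ces\colon\cx\to X$ is completely continuous.

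\textbf{Part (b).} Let $X$ be reflexive with $\overline{\alpha}_X<1$. Then $X\subseteq\cx$ and $\ces\colon X\to X$ is bounded. If $\ces\colon\cx\to X$ were completely continuous, then so would be its restriction $\ces\colon X\to X$ (the inclusion $X\hookrightarrow\cx$ being bounded). But a completely continuous operator on a reflexive space is compact, contradicting Proposition \ref{4.1}. Hence $\ces\colon\cx\to X$ is not completely continuous; and then by part (a) the space $\cx$ cannot be order isomorphic to an AL-space.

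\textbf{Part (c).} Assume $X$ contains no copy of $\ell^1$ and that $\ces\colon\cx\to X$ is completely continuous. The goal is to force $\cx=\lmxv$, so that by Proposition \ref{AL} the space $\cx$ is order isomorphic to an AL-space. The plan is: first, complete continuity of $\ces\colon\cx\to X$ restricts to complete continuity of $\imx\colon\lmx\to X$ on the closed subspace $\lmx=\cx_a$; since $X$ has no copy of $\ell^1$, the integration operator $\imx\colon\lmx\to X$ being completely continuous forces $\lmx$ to have no copy of $c_0$ — equivalently (as $\lmx$ has a.c.\ norm) $\lmx$ is weakly sequentially complete, hence has the Fatou property, hence $\lmx=\lmxv$ by the characterization in \cite[Ch.3]{okada-ricker-sanchez} (this is where one invokes that $\lmx=\lmxv$ iff $\lmx$ has the Fatou property, cf.\ the inclusion $\lmxv\subseteq\lmx$ and weak sequential completeness of $\lmxv$). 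Then $\cx_a=\lmx=\lmxv$, and since $\lmxv$ is weakly sequentially complete one argues as in the proof of (b)$\Rightarrow$(a) of Proposition \ref{AL} (Fatou property of $\cx_a$ gives $\cx_a=\cx''$, and $\cx_a\subseteq\cx\subseteq\cx''$ forces $\cx=\cx_a$), whence $\cx=\lmxv$ is an AL-space.

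\textbf{Main obstacle.} The delicate step is part (c): the implication ``$\imx\colon\lmx\to X$ completely continuous and $X\not\supseteq\ell^1$ $\Rightarrow$ $\lmx$ contains no copy of $c_0$.'' One expects this to follow from a dichotomy for $L^1(m)$-spaces: if $\lmx$ contained $c_0$, one could produce a weakly null (but non-norm-null) sequence of disjointly supported functions whose images under $\imx$ fail to converge in norm — using that on $c_0$-subspaces the integration operator behaves like a sum of vector-measure values, and $X\not\supseteq\ell^1$ prevents the required unconditional convergence. Pinning down the precise cited result (likely from \cite{okada-ricker-sanchez} or \cite{curbera1}) that equates ``$\lmx$ has a.c.\ norm and no copy of $c_0$'' with ``$\lmx$ weakly sequentially complete'' and then with ``$\lmx=\lmxv$'' is the crux; the rest is bookkeeping with \eqref{inclusions} and Proposition \ref{AL}.
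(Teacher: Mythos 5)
Your parts (a) and (b) are correct and essentially identical to the paper's own argument: (a) identifies $\cx\simeq\lmxv$ via Proposition \ref{AL} and uses the complete continuity of $\imx\colon\lmxv\to X$; (b) factorizes $\ces\colon X\to X$ through $\cx$, uses that a completely continuous operator on a reflexive space is compact, and contradicts Proposition \ref{4.1}.

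Part (c), however, contains a genuine gap. Your chain is ``$\lmx$ contains no copy of $c_0$ $\Rightarrow$ $\lmx$ weakly sequentially complete $\Rightarrow$ $\lmx$ has the Fatou property $\Rightarrow$ $\lmx=\lmxv$'', and the last implication is justified by the claimed equivalence ``$\lmx=\lmxv$ iff $\lmx$ has the Fatou property''. That equivalence is false: the Fatou completion of $\lmx$ is $\wlmx$ (the weakly $\mx$-integrable functions), not $\lmxv$; see \eqref{eq fatou}. The Fatou property of $\lmx$ only yields $\lmx=\wlmx$, while $\lmxv$ may remain a proper subspace --- e.g.\ for $X=\Lambda(\varphi_p)$ with $p\ge1$ (Example \ref{variation-L-rem}) the measure $\mx$ has infinite variation, so $\chi_{[0,1]}\notin\lmxv$, although $X$ (hence $\lmx$, by Remark \ref{wsc}(a)) is weakly sequentially complete. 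More decisively, weak sequential completeness of $\cx=\lmx$ cannot be the endpoint of any proof of (c): for $X=L^p$, $1<p<\infty$, the space $[\ces,L^p]$ is weakly sequentially complete by Remark \ref{wsc}(a), yet by part (b) it is \emph{not} order isomorphic to an AL-space. What (c) requires is exactly the much stronger conclusion $\lmx\simeq\lmxv$ of Proposition \ref{AL}(b), i.e.\ finite variation of $\mx$ together with the identification of $\lmx$ with $\lmxv$; this is where the hypothesis $\ell^1\not\subseteq X$ must enter through specific results on completely continuous integration operators. The paper argues by contradiction: if $\cx$ were not order isomorphic to an AL-space then $\lmx\neq\lmxv$; but the complete continuity of $\ces\colon\cx\to X$ restricts (as you correctly note) to complete continuity of $\imx\colon\lmx\to X$, and combining Corollary 1.4 of \cite{calabuig-etal} with Proposition 1.1 of \cite{okada-ricker-rpiazza2} this forces $\lmxv\simeq\lmx$ --- a contradiction. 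Your unproved intermediate claim (complete continuity plus $\ell^1\not\subseteq X$ excludes $c_0$ from $\lmx$), even if it were established, would therefore not suffice to close the argument.
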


\begin{proof}
(a) From Proposition \ref{AL} we have $\cx\simeq\lmxv$ which, together with
the operator $\imx\colon\lmxv\to X$ being completely continuous, establishes the claim.

(b) From $\overline{\alpha}_X<1$ we have $\ces\colon X\to X$ and so $X\subseteq\cx$.
Then, $\ces\colon X\to X$ can be factorized via $\ces\colon\cx\to X$.
Suppose that $\ces\colon\cx\to X$ is completely continuous. Then also $\ces\colon X\to X$
is completely continuous. Since $X$ is reflexive, we conclude that $\ces\colon X\to X$
is compact, which is a contradiction to Proposition \ref{4.1}.

Suppose now that $\cx$ is order isomorphic to an AL-space. Then part (a) implies that
$\ces\colon\cx\to X$ is completely continuous. But, we have just proved that this is not possible.

(c) Suppose that $\cx$ is not order isomorphic to an AL-space. Then it follows
from Proposition \ref{AL} that $\lmxv\not=\lmx$. On the other hand, the complete continuity of $\ces\colon\cx\to X$
implies (via factorization through $\lmx$) that
$\imx\colon\lmx\to X$ is also completely continuous. Combining  Corollary 1.4 of
\cite{calabuig-etal} with Proposition 1.1  of \cite{okada-ricker-rpiazza2},
it follows  that $\lmxv\simeq\lmx$. Contradiction!
\end{proof}

\begin{rem}
For an example of a reflexive r.i.\ space with $\overline{\alpha}_X=1$
we refer to \cite[Example 12, p.29]{maligranda}.
\end{rem}

Recall that a Banach space $X$ has the \textit{Dunford-Pettis property} if
every Banach-space-valued, weakly compact linear operator defined on $X$ is completely
continuous. The classical example of a space with this property is $L^1$.
In Theorem \ref{L1}(a) it was established, for certain Lorentz spaces $\laf$, that
$[\ces,\laf]\simeq L^1(|m_{\laf}|)$ with $|m_{\laf}|$ a finite, non-atomic measure.
Hence, $[\ces,\laf]$ has the Dunford-Pettis property in this case. However,
as noted in the Introduction, $Ces_p=[\ces,L^p]$, $1<p<\infty$, fails the
Dunford-Pettis property,  \cite[\S6, Corollary 1]{astashkin-maligranda1}.
The proof of this given in \cite{astashkin-maligranda1} relies on some  results concerning
certain Banach space properties particular to $Ces_p$. The following extension of this result
is established via the methods of vector measures.

\begin{pro}\label{4.7}
Let $X$ be any reflexive r.i.\ space with $\overline{\alpha}_X<1$.
Then, $\cx$ fails the Dunford-Pettis property.
\end{pro}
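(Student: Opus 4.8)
The plan is to exploit the factorization of $\ces\colon\cx\to X$ through the space $\lmx=\cx_a$ and to derive a contradiction from the assumption that $\cx$ has the Dunford--Pettis property. Since $X$ is reflexive and $\overline{\alpha}_X<1$, we have $X\subseteq\cx$ with $\ces\colon X\to X$ bounded, so $\ces\colon X\to X$ factors as the composition of the continuous inclusion $X\hookrightarrow\cx$ with $\ces\colon\cx\to X$. The key point is that the second map $\ces\colon\cx\to X$ is \emph{weakly compact}: its restriction to $\cx_a=\lmx$ is the integration operator $\imx\colon\lmx\to X$, and since $X$ is reflexive, every bounded operator into $X$ is weakly compact; one then has to argue that $\ces\colon\cx\to X$ itself is weakly compact, not merely its restriction. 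For this I would use that $X$ reflexive forces $\lmx=\wlmx$ (as $\wlmx$ never contains a copy of $c_0$ when $X$ has no copy of $c_0$, hence equals $\lmx$), so in fact $\cx=\cx''=\wlmx=\lmx$ and the restriction issue disappears: $\ces\colon\cx\to X$ \emph{is} $\imx$ on all of $\cx$, which is weakly compact.

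Granting weak compactness of $\ces\colon\cx\to X$, if $\cx$ had the Dunford--Pettis property then $\ces\colon\cx\to X$ would be completely continuous. Precomposing with the (bounded) inclusion $X\hookrightarrow\cx$ shows $\ces\colon X\to X$ is completely continuous. But $X$ is reflexive, so a completely continuous operator on $X$ is compact (bounded sets are relatively weakly compact, hence mapped to relatively norm compact sets). This contradicts Proposition~\ref{4.1}, which says $\ces\colon X\to X$ is not compact when $\overline{\alpha}_X<1$. Hence $\cx$ fails the Dunford--Pettis property.

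Alternatively, and perhaps more cleanly, I would route the argument through Proposition~\ref{cc lmx}(b): under the stated hypotheses that result already shows $\ces\colon\cx\to X$ is \emph{not} completely continuous. So it suffices to show $\ces\colon\cx\to X$ \emph{is} weakly compact; then the Dunford--Pettis property of $\cx$ would force complete continuity, contradicting Proposition~\ref{cc lmx}(b). The weak compactness of $\ces\colon\cx\to X$ reduces, via $\cx=\lmx$ (using reflexivity of $X$ as above, or simply observing $X$ reflexive $\Rightarrow$ $X$ has a.c.\ norm $\Rightarrow$ $\cx=\cx_a=\lmx$ by Proposition~\ref{3.1}(b)), to the weak compactness of $\imx\colon\lmx\to X$, which is immediate because $X$ is reflexive.

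The main obstacle is the justification that $\cx=\lmx$ (equivalently, that $\cx$ has a.c.\ norm) under the hypothesis that $X$ is reflexive. The cleanest route is: a reflexive B.f.s.\ has absolutely continuous norm (it contains no copy of $\ell^\infty$, equivalently no nonreflexive sublattice isomorphic to $\ell^\infty$), so $X=X_a$, whence Proposition~\ref{3.1}(b) gives $\cx=\cx_a=\lmx$ directly. After that the proof is a short chain of factorizations; no serious computation is required. One should be mildly careful that complete continuity of $\ces\colon X\to X$ really does yield compactness here --- this uses precisely that $X$ is reflexive so that the closed unit ball of $X$ is weakly compact, and completely continuous operators carry weakly compact sets to norm compact sets by Eberlein--\v{S}mulian, exactly as recalled in the paragraph preceding Proposition~\ref{cc lmxv}.
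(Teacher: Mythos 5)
Your proposal is correct and follows essentially the same route as the paper: reflexivity of $X$ gives a.c.\ norm, hence $\cx=\lmx$ by Proposition \ref{3.1}(b); the integration operator $\imx\colon\lmx\to X$ is weakly compact since $X$ is reflexive, so the Dunford--Pettis property would make it completely continuous; factoring $\ces\colon X\to X$ through the inclusion $X\subseteq\lmx$ then yields a completely continuous, hence (by reflexivity) compact, operator $\ces\colon X\to X$, contradicting Proposition \ref{4.1}. The detour in your first paragraph about $\wlmx$ and $c_0$ is unnecessary, but your ``cleaner'' route is exactly the paper's argument.
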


\begin{proof}
Since $X$ has a.c.\ norm, we have $\lmx=\cx$ (cf. \eqref{inclusions}) and hence,
because of $\overline{\alpha}_X<1$, it follows that $\ces\colon X\to X$ and so $X\subseteq\cx=\lmx$.
Suppose that $\cx$ has the Dunford-Pettis property. Then the weakly compact operator $\imx\colon\lmx\to X$
(recall that $X$ is reflexive) is necessarily completely continuous. Since $\ces\colon X\to X$ is the composition
of $\imx\colon\lmx\to X$ and the natural inclusion of $X$ into $\lmx$, it follows that
$\ces\colon X\to X$ is completely continuous. The reflexivity of $X$ then ensures that $\ces\colon X\to X$
is actually compact. But, this contradicts Proposition \ref{4.1}. Accordingly, $\cx$ fails the
Dunford-Pettis property.
\end{proof}


\section{The Fatou property for $\cx$}


In \cite[Theorem 1(d)]{lesnik-maligranda-1} it was noted that if $X$
has the Fatou property, then also $\cx$ has the Fatou property.
As explained in the beginning of \S3, the results on  optimal domains for  kernel operators
given in \cite[\S3]{curbera-ricker3} also apply to the kernel
generating the \ce operator (and to many other operators).
In \cite{curbera-ricker3}, a fine analysis of the Fatou property was undertaken.
Proposition \ref{3.1} above presents a partial view of the relations between
the various function spaces involved. The complete picture of the results
in \cite[\S3]{curbera-ricker3}  is presented below. It involves the
space $\wlmx$ consisting of all the functions which are \textit{weakly integrable} with respect to
the vector measure $\mx$, that is, of all  measurable functions $f\colon[0,1]\to\R$
such that  $f\in L^1(|x^*m_X|)$, for every $x^*\in X^*$. It is a B.f.s.\
for the \lq\lq same\rq\rq norm \eqref{norm-m} as used in $\lmx$ and contains $\lmx$
as a closed subspace, \cite[Ch.3, \S1]{okada-ricker-sanchez}.
The Copson operator $\ces^*$ was defined in the proof of Theorem \ref{L1}.
Whenever $X$ has a.c.\ norm and $\overline{\alpha}_X<1$ it is the dual
operator to $\ces\colon X\to X$.

The following  result is a summary of facts that occur in \cite{curbera-ricker3},
specialized to the \ce operator. Parts (a), (f) already occur in Proposition \ref{3.1}
and (b) also occurs in \cite[Theorem 1]{lesnik-maligranda-1}.
Part (k) is Theorem 3.1 of \cite{schep}; it provides an
alternate description of $\cx'$ to that given in \eqref{c*}.

\begin{pro}\label{all Indag}
Let $X\not=\linf$ be a r.i.\ space.
\begin{itemize}
\item[(a)] If $X$ has a.c.\  norm, then $\cx$ has a.c.\  norm and $\cx=\lmx$.
\item[(b)] If $X$ has the Fatou property, then $\cx$ has the Fatou property.
\item[(c)] If $X$ has the weak Fatou property, then $\cx$ has the weak Fatou property.
\item[(d)] If $X'$ is a norming subspace of $X^*$, then $\cx'$ is a norming subspace of $\cx^*$.
\item[(e)] If $X'$ is a norming subspace of $X^*$, then $\cx''=[\mathcal{C},X'']$.
\item[(f)] If $f\in \lmx$, then $f\in\cx$ and $\|f\|_{\lmx}= \|f\|_{\cx}$.
\item[(g)] If $f\in \cx$, then $f\in\wlmx$ and $\|f\|_{\wlmx}\le\|f\|_{\cx}$.
\item[(h)] If $f\in \wlmx$, then $f\in[\mathcal{C},X'']$ and $\|f\|_{[\mathcal{C},X'']}\le\|f\|_{\wlmx}$.
\item[(i)] $\cx''=\wlmx$ with equality of norms.
\item[(j)] If $X'$ is a norming subspace of $X^*$, then $\wlmx=[\mathcal{C},X'']$.
\item[(k)] If $X$ has a.c.\ norm, the Fatou property and satisfies $\overline{\alpha}_X<1$,
then $\cx'$ equals the ideal in $L^0$
generated by the range $\{\ces^*(f):f\in X'\}$ where $\ces^*$ acts in $X'$.
\end{itemize}

In the event that $X'$ is a norming subspace of $X^*$, there is equality of norms in (g) and (h).
\end{pro}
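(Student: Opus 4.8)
The plan is to obtain the bulk of the statement---parts (a)--(j)---directly from the optimal-domain results of \cite[\S3]{curbera-ricker3}, to obtain (k) from \cite[Theorem~3.1]{schep}, and to deduce the closing equalities of norms from the other parts by elementary associate-space theory. As observed at the start of \S\ref{3}, the arguments of \cite[\S3]{curbera-ricker3} for a kernel operator $Tf(x)=\int_0^1 f(y)K(x,y)\,dy$ with non-negative kernel $K$ remain valid for every r.i.\ space $X\neq\linf$ provided that, first, the partial functions $K_x\colon y\mapsto K(x,y)$ lie in $L^1$ for $\lambda$-a.e.\ $x$ and, second, the induced set function $A\mapsto T(\chi_A)$ is $\sigma$-additive when regarded as an $X$-valued measure. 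So the initial step is to verify these two requirements for the \ce operator. For $\ces$ the kernel is $K(x,y)=(1/x)\chi_{[0,x]}(y)$, so $K_x=(1/x)\chi_{[0,x]}\in\linf\subseteq L^1$ for every $x\in(0,1]$, which gives the first; and the second is exactly Theorem~\ref{measure}(a). Granting this, (a), (f) restate Proposition~\ref{3.1}(b),(a); (b) also appears in \cite[Theorem~1]{lesnik-maligranda-1}; (c)--(e) transcribe the corresponding assertions on the weak Fatou property, on $\cx'$ being a norming subspace of $\cx^*$, and on the identity $\cx''=[\ces,X'']$; (g), (h) are the norm inequalities making up the chain $\lmx\subseteq\cx\subseteq\wlmx$ of \eqref{inclusions}; (i) identifies $\wlmx$ with the Fatou completion $\cx''$; and (j) is the conjunction of (e) and (i).

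For the equality of norms in (g) and (h) when $X'$ is a norming subspace of $X^*$, the next step is to read it off from the other parts using only standard facts about associate spaces: for any B.f.s.\ $Y$ the natural inclusion $Y\hookrightarrow Y''$ is norm-decreasing, and is isometric precisely when $Y'$ is norming in $Y^*$; moreover, if $A\subseteq B$ is a norm-decreasing inclusion of B.f.s.', then so is $A''\subseteq B''$. Applying the first fact to $Y=\cx$: by (i), $Y''=\cx''=\wlmx$ isometrically, and if $X'$ is norming then so is $\cx'$ by (d); hence $\|f\|_{\wlmx}=\|f\|_{\cx}$ for $f\in\cx$, which is the equality in (g). For (h), observe that $\cx\subseteq[\ces,X'']$ norm-decreasingly (since $X\subseteq X''$ is, and $X\mapsto[\ces,X]$ is monotone), so $\cx''\subseteq[\ces,X'']''=[\ces,X'']$ norm-decreasingly, where the last identity uses the Fatou property of $[\ces,X'']$ given by (b); together with $\cx''=\wlmx$ this is the general inequality of (h), and under the norming hypothesis it becomes an equality since then $[\ces,X'']=\cx''=\wlmx$ isometrically by (e) and (i). Finally, part (k) needs no new argument: its hypotheses---$X$ r.i.\ with a.c.\ norm, the Fatou property, and $\overline{\alpha}_X<1$, so that $\ces$ acts boundedly on $X$ and $\ces^*$ is the adjoint of $\ces\colon X\to X$ on $X'=X^*$---are precisely those of \cite[Theorem~3.1]{schep}, whose conclusion is the asserted description of $\cx'$ as the ideal in $L^0$ generated by $\{\ces^*(f):f\in X'\}$.

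The one genuine obstacle I anticipate lies in the initial step, and it is a matter of bookkeeping rather than of new ideas: one must go through the proofs in \cite[\S3]{curbera-ricker3} and confirm that the structural hypotheses imposed there on the kernel are invoked solely to secure the $\linf$-valued (hence $X$-valued) $\sigma$-additivity of the associated kernel measure, so that substituting Theorem~\ref{measure}(a) for that derivation leaves every subsequent argument intact under only the standing requirement that $K_x\in L^1$ for a.e.\ $x$. Once this has been verified, all of (a)--(k), together with the closing equalities of norms, follow either by transcription or by the elementary associate-space manipulations indicated above.
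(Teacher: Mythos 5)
Your proposal is correct and follows essentially the same route as the paper: the paper likewise presents (a)--(j) as a summary of the optimal-domain results of Curbera--Ricker specialized to the Ces\`aro kernel (justified, as you do, by the observation at the start of Section 3 that only $K_x\in L^1$ a.e.\ and the $\sigma$-additivity from Theorem \ref{measure}(a) are needed), and quotes Schep's Theorem 3.1 for (k). Your additional associate-space argument for the equality of norms in (g) and (h) under the norming hypothesis is a harmless elaboration of what the paper simply absorbs into the citation.
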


The following chain of inclusions, which refines \eqref{inclusions},
summarizes the situation (cf. (9) on p.199 of \cite{curbera-ricker3}):
\begin{equation}\label{eq fatou}
\lmx \subseteq \cx \subseteq \cx'' = \wlmx = \lmx'' \subseteq [\mathcal{C},X''] .
\end{equation}
If $X$ has a.c.\  norm, then the first and last containments are equalities
and the second containment an isometric embedding. On the other hand,  if $X$ has the Fatou property (i.e., $X=X''$),
then the second and last containments are equalities. Finally, in case $X$ has both a.c.\  norm and the Fatou
property (i.e., $X$ is weakly sequentially complete), then  all spaces involved coincide.

It should be stressed that the space $\wlmx$ plays a crucial role.
Recall that whenever a B.f.s.\ $X$ space does not have the Fatou property, then it is
always possible to identify its \textit{`Fatou completion'}, that is,
the smallest of all B.f.s.' which contain $X$ and have  the Fatou property,
 \cite[\S 71, Theorem 2]{zaanen1}. This space coincides with
$X''$. Proposition \ref{all Indag}(i)  shows that the space $\wlmx$ is
precisely the Fatou completion
of the \ce space $\cx$, whereas  $\lmx$ is the a.c.\  part of $\cx$.

We conclude with two relevant results. Recall (cf. Theorem \ref{reflexive}(a))
that  the space  $\cx$ is never reflexive. Weak sequential completeness of a B.f.s.\
is often a good replacement for the space failing to be reflexive.

\begin{pro}\label{5.2}
Let $X\not=\linf$ be a r.i\ space.
\begin{itemize}
\item[(a)]  If the integration operator $\imx\colon\lmx\to X$ is weakly compact,
then $\cx$ is weakly sequentially complete.

If, in addition, $\overline{\alpha}_X<1$, then  $\ces\colon X\to X$ is also weakly compact.

\item[(b)]  If  the integration operator
$\imx\colon\lmx\to X$ is completely continuous,
then $\cx$ is weakly sequentially complete.

If, in addition, $\overline{\alpha}_X<1$, then  $\ces\colon X\to X$ is also completely continuous.
\end{itemize}
\end{pro}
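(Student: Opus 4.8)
Both parts reduce to a single assertion about the $L^1$-space of the vector measure, which is then fed into the chain \eqref{eq fatou} and the known factorization of $\ces$ through $\imx$. The plan is: (i) show that under either hypothesis on $\imx$ the space $\lmx$ is weakly sequentially complete; (ii) deduce from this that $\cx=\lmx$, whence $\cx$ itself is weakly sequentially complete; (iii) under the extra hypothesis $\overline{\alpha}_X<1$, factor $\ces\colon X\to X$ through $\imx$ and transfer the operator property.

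For step (i), recall that $\lmx$ always carries an order continuous norm. A weakly compact operator cannot fix a copy of $c_0$ (as $c_0$ is not reflexive), and a completely continuous operator cannot fix a copy of $c_0$ (the unit vector basis of $c_0$ converges weakly, but not in norm, to $0$); so in both cases $\imx$ is an \emph{unconditionally converging} operator, i.e.\ it carries weakly unconditionally Cauchy series to unconditionally convergent ones. I would then appeal to the known fact from the theory of $L^1(m)$ that this forces $\lmx$ to be weakly sequentially complete, or else argue directly: if $\lmx$ were not weakly sequentially complete, then, having order continuous norm, it would contain a normalized, positive, pairwise disjoint sequence $(g_n)$ equivalent to the unit vector basis of $c_0$; for each $A\in\mathcal{M}$ the series $\sum_n g_n\chi_A$ is then weakly unconditionally Cauchy in $\lmx$ (by disjointness and the ideal property), hence $\sum_n\imx(g_n\chi_A)$ is weakly unconditionally Cauchy, and so unconditionally convergent, in $X$; setting $h:=\sup_N\sum_{n=1}^N g_n$ (which is $\lambda$-a.e.\ finite, cf.\ Remark \ref{2.2}(b)), the uniform $\lmx$-norm bound on the partial sums forces $h\in\wlmx$ (via the norm formula \eqref{norm-m} and monotone convergence), while order continuity of the norm of $\lmx$ forces $h\notin\lmx$; but the convergence of $\sum_n\imx(g_n\chi_A)$ for every $A$, tested against each $x^*\in X^*$, says precisely that $\int_A h\,d\mx\in X$ for every $A$, i.e.\ $h\in\lmx$ --- a contradiction.

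For step (ii), a weakly sequentially complete B.f.s.\ is a KB-space and hence has the Fatou property (a norm-bounded increasing sequence converges in norm, so its pointwise limit lies in the space); thus $\lmx=\lmx''$. By \eqref{eq fatou}, $\lmx''=\cx''$, so $\lmx=\cx''$, and combined with $\lmx\subseteq\cx\subseteq\cx''$ from \eqref{eq fatou} this yields $\cx=\lmx$, which is weakly sequentially complete. This settles the first assertion of (a) and of (b). For step (iii), assume $\overline{\alpha}_X<1$, so that $X\subseteq\cx$ with $\ces\colon X\to X$ bounded; by step (ii) we have $\cx=\lmx$, and the restriction of $\ces\colon\cx\to X$ to $\lmx$ is $\imx$, so $\ces\colon X\to X$ equals $\imx$ precomposed with the bounded inclusion $X\hookrightarrow\lmx$. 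By the ideal property of the classes of weakly compact, resp.\ completely continuous, operators, $\ces\colon X\to X$ then inherits the corresponding property from $\imx$.

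The main obstacle is step (i): converting a property of the operator $\imx$ into weak sequential completeness of its domain $\lmx$. This is the only place where genuine vector-measure input is needed --- the interplay between unconditionally converging operators, weakly unconditionally Cauchy series built from disjoint blocks in $\lmx$, and the identification $\cx''=\wlmx$ of the Fatou completion. Steps (ii) and (iii) are then formal consequences of \eqref{inclusions}, \eqref{eq fatou} and Proposition \ref{3.1}.
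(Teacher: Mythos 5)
Your proposal is correct, but the key step is handled differently from the paper. Where you prove directly that $\lmx$ is weakly sequentially complete, the paper simply cites known results on integration operators: weak compactness of $\imx$ forces $\lmx=\wlmx$ by Corollary 2.3 of \cite{curbera-ricker4}, and complete continuity forces the same equality by Theorem 3.6 of \cite{delcampo-etal}; then $\lmx$ inherits the Fatou property from $\wlmx=\cx''$ and, having a.c.\ norm, is weakly sequentially complete, whence \eqref{eq fatou} gives $\lmx=\cx=\wlmx$. Your argument inverts this order: you observe that in both cases $\imx$ is unconditionally converging (does not fix a copy of $c_0$), and then run a self-contained disjointification argument --- a normalized positive disjoint $c_0$-sequence $(g_n)$ in $\lmx$ would give, for every $A\in\mathcal{M}$, a weakly unconditionally Cauchy series $\sum_n g_n\chi_A$ whose image under $\imx$ converges unconditionally, and testing against $x^*\in X^*$ identifies the sum as $\int_A h\,d\mx$ for $h=\sum_n g_n\in\wlmx$, forcing $h\in\lmx$ and contradicting order continuity of the norm. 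This is a genuine reproof of (the relevant consequence of) the two cited results, and it buys something: a single argument covering both hypotheses at once, and indeed the more general hypothesis that $\imx$ is merely unconditionally converging. The paper's route is shorter and yields the equality $\lmx=\wlmx$ directly from the literature. Your steps (ii) and (iii) --- passing from weak sequential completeness of $\lmx$ to its Fatou property, using $\lmx''=\cx''$ from \eqref{eq fatou} to conclude $\cx=\lmx$, and factorizing $\ces\colon X\to X$ as $\imx$ composed with the inclusion $X\hookrightarrow\lmx$ when $\overline{\alpha}_X<1$ --- coincide with the paper's. The only cosmetic points: the a.e.\ finiteness of $h$ is automatic from disjointness of supports (no need to invoke Remark \ref{2.2}(b)), and the weak unconditional Cauchyness of $\sum_n g_n\chi_A$ follows from positivity plus the uniform norm bound $\bigl\|\sum_{n\le N}g_n\chi_A\bigr\|_{\lmx}\le\bigl\|\sum_{n\le N}g_n\bigr\|_{\lmx}$, which is what you mean by ``disjointness and the ideal property.''
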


\begin{proof}
(a)  If  $\imx\colon\lmx\to X$ is weakly compact, then Corollary 2.3 of
\cite{curbera-ricker4} asserts that
$\lmx=\wlmx$ and hence, $\lmx$ has the Fatou property; see \eqref{eq fatou}. Being also a.c.,
it follows that  $\lmx$ is weakly sequentially complete.
Again according to \eqref{eq fatou} we then have  $\lmx=\cx =\wlmx$.

If, in addition, $\ces\colon X\to X$, then $\ces$ factorizes through $\lmx$ via $\imx$ and
so is itself also weakly compact.

(b) If  $\imx\colon\lmx\to X$ is completely continuous, then again it is known that
necessarily $\lmx=\wlmx$, \cite[Theorem 3.6]{delcampo-etal}. A similar argument as in the proof of (a) establishes
the result.
\end{proof}


\bibliographystyle{amsplain}


\end{document}